\documentclass[12pt,a4paper,twoside, reqno, tbtags]{article}
\usepackage{amsmath,amsthm,amssymb,amsfonts,graphicx,geometry,epsfig,cases,subcaption,amssymb,amscd,enumerate,verbatim}
\usepackage[colorlinks=true, linkcolor=blue, urlcolor=blue, citecolor=blue]{hyperref}

\usepackage{tikz}

\textwidth=12cm

\textheight=19.5cm

\topmargin=0.5cm

\oddsidemargin=1.5cm

\evensidemargin=1.5cm

\pagestyle{plain}

\numberwithin{equation}{section}
\newtheorem{theorem}{Theorem}[section]
\newtheorem{corollary}{Corollary}[section]
\newtheorem{lemma}{Lemma}[section]
\newtheorem{definition}{Definition}[section]
\newtheorem{proposition}{Proposition}[section]
\newtheorem{example}{Example}[section]
\newtheorem{remark}{Remark}[section]
\geometry{margin=1in}

\usepackage{sectsty}
\let\OLDthebibliography\thebibliography
\renewcommand\thebibliography[1]{
	\OLDthebibliography{#1}
	\setlength{\parskip}{0pt}
	\setlength{\itemsep}{0pt plus 0.3ex}
}

\usepackage{titlesec}
\titlespacing*{\section}
{0pt}{1ex}{0ex}
\titlespacing*{\subsection}
{0pt}{0ex}{0ex}
\titlespacing*{\subsubsection}
{0pt}{0ex}{0ex}
\usepackage{enumitem}
\setlist[1]{itemsep=-5pt}
\setlength{\belowcaptionskip}{-12pt}

\bibliographystyle{elsarticle-num}

\begin{document}
	\label{pgStart}
\begin{center}
    \textbf{Multiple Cylinder Relations for Finite Spaces and Nerve Theorem for Strong-good cover}

    By
 
    \textbf{Ponaki Das$^1$}, \textbf{Sainkupar Marwein Mawiong$^{2*}$}
    
    $^1$Department of Mathematics, North-Eastern Hill University, NEHU Campus, Shillong, India-793022
    
    $^2$Department of Basic Sciences and Social Sciences, North-Eastern Hill
    University,
    
    NEHU Campus, Shillong, India-793022
    
    Email: ponaki.das20@gmail.com $^*$Corresponding author: skupar@gmail.com
\end{center}

\begin{abstract}
\noindent In this paper, we introduce the \emph{multiple cylinder of relations}, a generalization of the relation cylinder that extends the concept of the multiple non-Hausdorff mapping cylinder to sequences of finite $T_0$-spaces connected by a series of relations. This construction captures complex homotopical structures across chains of finite spaces and, when the relations are induced by maps, serves as an intermediate space that collapses onto two distinct finite spaces.

\noindent We also define the notion of a \emph{strong-good cover} for both finite simplicial complexes and finite spaces. Unlike classical good covers, which require all finite non-empty intersections to be contractible, strong-good covers impose a stricter condition: that these intersections be \emph{collapsible}. This leads to a strengthened version of the classical \emph{Nerve Theorem}, proving that finite simplicial complexes or finite spaces with strong-good covers are \emph{(simple) homotopy equivalent} to their nerves.

\noindent Furthermore, we demonstrate that strong-good covers yield significant implications in persistent homology. Specifically, we show that the associated persistence modules vanish in all positive degrees, illustrating that collapsibility enforces strong combinatorial constraints with topological consequences.
\end{abstract}

\begin{flushleft}
    \textbf{\large Keywords:}
    Finite Spaces, Simplicial Complexes,The Relation Cylinder, Nerve.
\end{flushleft}
    {\bf 2020 Mathematics Subject Classification.} {06A99, 05E45, 57Q10}

\section{Introduction}

The homotopical study of finite \( T_0 \)-spaces has gained significant interest in algebraic topology, combinatorics, and discrete geometry due to the versatility of finite spaces as combinatorial models that reflect the structure of classical topological spaces. Finite \( T_0 \)-spaces enable discrete analogs of continuous constructions, offering computationally feasible frameworks for examining homotopy types. One such foundational construction is the \emph{non-Hausdorff mapping cylinder}, introduced by Barmak~\cite{Barmak}, which adapts the classical mapping cylinder to the setting of finite $T_0$-spaces. This tool has been used to establish simplified proofs of central results, such as Quillen's Theorem A for posets, and to refine the classical Nerve Theorem for finite topological spaces.

\noindent The \textit{Nerve Theorem} is a fundamental idea in algebraic topology which has been widely applied across various fields due to its ability to relate the homotopy type of a space \( X \) to the nerve of a cover \( U = \{ U_i \} \) of $X$. Originally formulated by Borsuk \cite{Borsuk} and later developed by Leray \cite{Leray}, Weil \cite{Weil}, and McCord \cite{McCord}, the Nerve Theorem states that if the cover \( U \) is good, meaning every intersection of elements in \( U \) is contractible or empty, then the nerve of \( U \) has the same homotopy type as \( X \). More recently, Björner \cite{Bjorner} and Barmak \cite{Barmak} expanded on these results, finding applications in combinatorics, computational topology, and applied topology. However, the condition of being a good cover is somewhat limiting and not always practical.

\noindent In the homological setting, the classical Nerve Theorem was refined by Meshulam~\cite{Meshulam}, who established a \textit{Homological Nerve Theorem} that extends the classical result to more general algebraic contexts. Specifically, let \(\mathcal{U} = \{U_i\}_{i \in I}\) be a finite cover of a simplicial complex \(X\), and let \(N = N(\mathcal{U})\) denote the nerve of the cover. If for some integer \(k \geq 0\), the reduced homology \(\widetilde{H}_j(U_\sigma)\) vanishes for all \(\sigma \in N^{(k)}\) and all \(0 \leq j \leq k - \dim \sigma\), then the reduced homology of \(X\) is isomorphic to that of the nerve in degrees up to \(k\); that is, \(\widetilde{H}_j(X) \cong \widetilde{H}_j(N)\) for all \(0 \leq j \leq k\). Moreover, if \(H_{k+1}(N) \neq 0\), then \(H_{k+1}(X) \neq 0\). This homological refinement, which relies solely on the acyclicity of finite intersections, significantly broadens the applicability of nerve theorems within algebraic topology.

\noindent This paper builds on the Nerve Theorem by introducing a new concept called the ``multiple cylinder of relations". This idea generalizes the non-Hausdorff mapping cylinder, allowing sequences of finite \( T_0 \)-spaces to be connected not just by direct maps but through broader relational structures. This approach makes it possible to study homotopy equivalences between finite spaces linked by general relations, expanding the range of homotopy methods in combinatorial topology. The multiple cylinder of relations captures homotopical structures across chains of related finite spaces, creating a unified way to analyze homotopy equivalences and mapping properties. When relations are induced by maps, the multiple cylinder of relations simplifies to an intermediary space that collapses to two distint finite spaces (Remark \ref{2}).

\noindent Moreover, this paper introduces the concept of \textit{strong-good covers} for simplicial complexes and finite spaces (Definition \ref{11} for simplicial complexes and Definition \ref{14} for finite spaces). While classical good covers require that every non-empty intersections of subfamilies are contractible, strong-good covers impose a stricter condition: that every non-empty intersections are collapsible. This distinction allows for a strengthened form of the Nerve Theorem. Specifically, we establish a \textit{Nerve Theorem for strong-good covers} applicable to both finite simplicial complexes (Remark \ref{13}) and finite spaces (Remark \ref{17}), proving that simplicial complexes and their nerves (or finite spaces and their non-Hausdorff nerves) share the same (simple) homotopy type under these stronger conditions.

\noindent In addition to this homotopical refinement, we demonstrate a new application of strong-good covers in the context of persistent homology. We show that when a simplicial complex is covered by a strong-good cover, the resulting persistence modules vanish in all positive degrees. This vanishing property does not hold for classical good covers, thereby distinguishing the strong-good condition as not only a homotopically stronger assumption but also one with meaningful algebraic consequences in the study of filtrations and topological data analysis.

\noindent The main results of this paper can be summarized as follows:
\begin{enumerate}
    \item \textbf{Definition, Properties, and Collapse Conditions of the Multiple Cylinder of Relations}: We formally define the multiple cylinder of relations (Definition \ref{1}) and establish its homotopy properties, identifying conditions under which sequences of relations within the cylinder result in homotopical collapses. These collapses lead to simple homotopy equivalences between the order complexes of the finite spaces involved (Propositions \ref{3} and \ref{4}). Additionally, we determine conditions where certain homotopical trivialities in intersections lead to further collapses within the multiple cylinder of relations, simplifying order complexes and revealing insights into the homotopical structure in complex arrangements of finite spaces connected by relations (Propositions \ref{3}, \ref{4}, \ref{6}, \ref{7}, Remarks \ref{5},\ref{8}, \ref{9} and, \ref{10}).

    \item \textbf{Nerve Theorem for Strong-Good Covers of Finite Spaces and Complexes}: By introducing the notion of strong-good covers with collapsible intersections, we establish a strengthened Nerve Theorem for finite simplicial complexes (Remark~\ref{13}) and finite spaces (Remark~\ref{17}). This result shows that finite simplicial complexes (or finite spaces) and their corresponding nerves (or non-Hausdorff nerves) are (simple) homotopy equivalent, extending classical results in discrete topological settings and providing novel tools for analyzing (simple) homotopy types.

    \item \textbf{Vanishing of Persistent Homology over Strong-Good Covers}: We prove that if  a  finite simplicial complex admits a strong-good cover, then all positive-degree persistent homology groups associated with subunions of the cover vanish (Theorem~\ref{persistent-truncation}). This result, which does not hold under the usual good cover assumption, highlights a structural advantage of collapsibility in controlling algebraic invariants of filtrations.
\end{enumerate}

These results contribute new perspectives in the homotopical study of finite \( T_0 \)-spaces and finite simplicial complexes, providing generalized constructs that advance beyond classical settings. By introducing the multiple cylinder of relations and the strong-good cover framework, we enhance the toolkit of algebraic topology, offering refined methods for investigating homotopical structures in finite and combinatorial spaces. This paper thus lays foundational work for future research in combinatorial topology, discrete geometry, and related fields, where the homotopical and homological properties of finite spaces are central.

\noindent In this work, we will be using the terms ``beat points," ``weak points," ``$\gamma$- points," ``contractible," ``collapsible," and ``homotopically trivial." For detailed explanations and precise definitions of these concepts, please refer to \cite{Barmak}.

\section{Preliminaries}

    In this section, we recall the key concepts and definitions that will be used throughout our work.
\begin{definition}\cite{Barmak}\label{a}
    The \textbf{mapping cylinder} of a map $f: X \rightarrow Y$ between topological spaces is the space $Z_f$ obtained from $(X \times I) \cup Y$ by identifying each point $(x, 1) \in X \times I$ with $f(x) \in Y$. 
\end{definition}
    \noindent Both $X$ and $Y$ are subspaces of $Z_f$. We denote by $j: Y \rightarrow Z_f$ and $i: X \rightarrow Z_f$ the canonical inclusions, where $i$ is defined by $i(x) = (x, 0)$. The space $Y$ is a strong deformation retract of $Z_f$. Moreover, there exists a retraction $r : Z_f \rightarrow Y$ with $jr \simeq 1_{Z_f}$ rel $Z_f$ which satisfies that $ri = f$.
    
    \noindent Barmak introduced a finite analog of the classical mapping cylinder known as the non-Hausdorff mapping cylinder.
\begin{definition}\cite{Barmak}\label{b}
    Let $f: X \rightarrow Y$ be a map between finite $T_0$-spaces. We define the \textbf{non-Hausdorff mapping cylinder} $B(f)$ as the following finite $T_0$-space. The underlying set is the disjoint union $X \cup Y$. We keep the given ordering within $X$ and $Y$ and for $x \in X$, $y \in Y$ we set $x \leq y$ in $B(f)$ if $f(x) \leq y$ in $Y$.
\end{definition}
    \noindent We will denote by $i: X \rightarrow B(f)$ and $j: Y \rightarrow B(f)$ the canonical inclusions of $X$ and $Y$ into the non-Hausdorff mapping cylinder.
\begin{lemma}\cite{Barmak}\label{c}
    Let $f: X \rightarrow Y$ be a map between finite $T_0$-spaces. Then $Y$ is a strong deformation retract of $B(f)$.
\end{lemma}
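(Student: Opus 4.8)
The plan is to work through the standard dictionary between finite $T_0$-spaces and finite posets, combined with Barmak's theory of beat points, where deletion of an up beat point from a finite $T_0$-space is a strong deformation retract. First I would record the shape of the underlying poset of $B(f)$: it keeps the orders of $X$ and $Y$ unchanged and adds only relations of the form $x \le y$ (for $x \in X$, $y \in Y$) precisely when $f(x) \le y$. In particular, no element of $Y$ ever lies below an element of $X$, so $Y$ is a down-set of $B(f)$ and is obtained from $B(f)$ by deleting all the points of $X$.

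The key observation is the following. If $x$ is a maximal element of the poset $X$, then the set of points strictly greater than $x$ in $B(f)$ is exactly $\{y \in Y : f(x) \le y\}$, and this set has $f(x)$ as its minimum (since $f(x) \le f(x)$ puts $f(x)$ into the set, and $f(x) \le y$ for every other member). Hence $x$ is an up beat point of $B(f)$, and by Barmak's result the subspace $B(f) \smallsetminus \{x\}$ is a strong deformation retract of $B(f)$.

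To remove all of $X$ at once, I would fix a linear extension $x_1, \dots, x_n$ of the poset $X$ (so that $x_i \le x_j$ implies $i \le j$) and delete the points in the reverse order $x_n, x_{n-1}, \dots, x_1$. At the stage where $x_n, \dots, x_{i+1}$ have been deleted, the remaining space is precisely the non-Hausdorff mapping cylinder $B(f|_{\{x_1, \dots, x_i\}})$, in which $x_i$ is maximal and therefore an up beat point by the observation above. Thus each deletion is a strong deformation retract, and composing these finitely many strong deformation retracts shows that $Y$ is a strong deformation retract of $B(f)$.

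The main point to check carefully is that after the earlier deletions the up-set of $x_i$ in the reduced space is still $\{y \in Y : f(x_i) \le y\}$ with minimum $f(x_i)$; this rests on the fact that deleting maximal elements of $X$ leaves the relations among the surviving points untouched and that order-preservation of $f$ makes $f(x_i) \le y$ the binding constraint. Alternatively, and as a useful cross-check, one can exhibit the retraction $r : B(f) \to Y$ directly by $r|_Y = \mathrm{id}_Y$ and $r(x) = f(x)$, verify that $r$ is order-preserving (hence continuous) using monotonicity of $f$, and note that $x \le f(x)$ in $B(f)$ gives $\mathrm{id}_{B(f)} \le jr$ in the pointwise order; the resulting comparable-maps homotopy is stationary on $Y$, realizing the strong deformation retraction rel $Y$.
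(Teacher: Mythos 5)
Your proof is correct, but note that the paper itself states this lemma without proof, citing Barmak; the proof in that reference (which your closing ``cross-check'' paragraph reproduces exactly) is the one-step argument: define $r : B(f) \rightarrow Y$ by $r|_Y = \mathrm{id}_Y$ and $r|_X = f$, observe that $r$ is order-preserving and that $1_{B(f)} \leq jr$ because $x \leq f(x)$ in $B(f)$, and invoke the fact that comparable maps between finite spaces are homotopic rel the subspace on which they coincide. Your primary argument --- fixing a linear extension $x_1, \dots, x_n$ of $X$, deleting the points in reverse order, checking that each $x_i$ is an up beat point of the remaining space because its strict up-set is $\{y \in Y : f(x_i) \leq y\}$ with minimum $f(x_i)$, and composing the resulting strong deformation retractions --- is a genuinely different and equally valid route. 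It is longer, and it needs the verification you rightly flag (that the surviving points form $B(f|_{\{x_1,\dots,x_i\}})$, which holds because a subset of a finite poset carries the induced order, and that $x_i$ is maximal there by the linear-extension property), but it buys a stronger conclusion: it exhibits $Y$ as obtained from $B(f)$ by a sequence of elementary beat-point removals, i.e.\ $B(f)$ strong collapses to $Y$, which is a combinatorially sharper statement and is very much in the spirit of the collapse arguments (e.g.\ the $\gamma$-point removals in Proposition \ref{3}) that the rest of the paper relies on.
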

    \noindent If $X$ and $Y$ are two finite spaces homotopy equivalent to each other, there exists a third space $Z$ containing both $X$ and $Y$ as strong deformation retracts. This space can be taken as the mapping cylinder of any homotopy equivalence $X \rightarrow Y$. If $f: X \rightarrow Y$ is a homotopy equivalence between finite $T_0$-spaces, $Y$ is a strong deformation retract of $B(f)$, but $X$ in general is just a (weak) deformation retract. we have seen that if two finite $T_0$-spaces are homotopy equivalent, there exists a third finite $T_0$-space containing both as strong deformation retracts (\cite{Barmak},Proposition 4.6.6.).
    
    \noindent To define multiple cylinder of relations, let's first recall the definition of relation cylinder: 
\begin{definition}[\textnormal{\cite{Ximena}}]\label{d}
    Given $R\subseteq X\times Y$, we define \textbf{the cylinder of the relation} (or \textbf{the relation cylinder}) $B(R)$ as the following finite poset. The underlying set is the disjoint union $X \sqcup Y$. We keep the given order in both
    $X$ and $Y$, and for every $x\in X$ and $y \in Y$ we set $x\leq y$ in $B(R)$ if there are points $x^\prime \in X$ and $y^\prime \in Y$
    such that $x \leq x^\prime$ in $X$, $y^\prime \leq y$ in $Y$ and $x^\prime Ry^\prime$ (i.e. we take the order relation generated by $x \leq y$ if $xRy$).
\end{definition}
    \noindent For $A\subseteq X$ and $B \subseteq Y$, we set $R(A) = \{y \in Y : xRy$ for some $x \in A\}$; $R^{-1}(B) = \{x \in X : xRy$ for some $y \in B\}$.
    
    \noindent If $A$ is a subspace of a finite space, $\overline{A} = \{x \in X : x \geq a$ for some $a \in A\} = \bigcup_{a\in A} F_a$ is the closure of $A$, and $\underline{A} = \{ x \in X : x \leq a \text{ for some } a \in A \} = \bigcup_{a \in A} U_a$ is the open hull of $A$.
    
    \noindent If $X$ and $Y$  are finite $T_0$-spaces such that $X$ and $Y$ are simple homotopy equivalent, then there exists a finite $T_0$-space $Z$ which collapses to both of them. One such space is obtained by considering the multiple non-Hausdorff mapping cylinder \cite{Barmak} which is a generalization of the non-Hausdorff mapping cylinder defined in \cite{Barmak}.
    
    \noindent Here, we recall the following three results which we will modify for composite relations as well as for multiple sequence of relations.
\begin{proposition}\cite{Ximena}\label{e}
    Let $R\subseteq X\times Y$ be a relation between two finite spaces. If $\underline{{(R)}^{-1}(U_y)}$ is homotopically trivial for every $y\in Y$ then $B(R)\diagup\hspace{-0.17 cm}\searrow X$. In particular, the order complexes $K(B(R))$ and $K(X)$ are (simple) homotopy equivalent. Moreover, if $\underline{{(R)}^{-1}(U_y)}$ is collapsible, then $B(R)\searrow X$ and hence $K(B(R))\searrow K(X)$.
\end{proposition}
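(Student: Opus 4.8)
The plan is to reduce $B(R)$ to $X$ by deleting the points of $Y$ one at a time, exhibiting each deleted point as a down weak point (resp. a down $\gamma$-point) whose down-link is \emph{exactly} $\underline{(R)^{-1}(U_y)}$, and then to transport the resulting chain of one-point reductions to the order complexes. Concretely, I would order the elements of $Y$ as $y_1,\dots,y_n$ along a linear extension of the partial order of $Y$, so that $y_a <_Y y_b$ implies $a<b$, and set $Z_0=B(R)$ and $Z_i=Z_{i-1}\setminus\{y_i\}$, noting that $Z_n = X$ as a subposet since $B(R)$ induces the original order on $X$.

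First I would record the two structural facts about $B(R)$ that drive the argument: $X$ is a down-set (an open subspace) and $Y$ is an up-set (a closed subspace), because the generating relations of Definition~\ref{d} only add inequalities of the form $x\le y$. Consequently, for any $y\in Y$ one has $U_y^{B(R)} = U_y^{Y}\cup\underline{(R)^{-1}(U_y)}$ with $U_y^{Y}\subseteq Y$ and $\underline{(R)^{-1}(U_y)}\subseteq X$; indeed $x\le y$ in $B(R)$ holds exactly when $x\le x'$ and $x'Ry'$ for some $y'\le y$, i.e. when $x\in\underline{(R)^{-1}(U_y)}$. The main step is then the stage-wise computation of the down-link. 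Passing to $Z_{i-1}$, whose order is the restriction of that of $B(R)$, every element of $Y$ strictly below $y_i$ occurs earlier in the linear extension and has already been deleted, so the only element of $Y$ remaining below $y_i$ is $y_i$ itself, while the $X$-part is untouched. Hence
\[
\hat U_{y_i}^{\,Z_{i-1}} \;=\; U_{y_i}^{Z_{i-1}}\setminus\{y_i\} \;=\; \underline{(R)^{-1}(U_{y_i})}.
\]
This identity is the crux: the down-link of $y_i$ in the current poset is precisely the set whose homotopical behaviour is controlled by the hypothesis, and it contains no points of $Y$, so the up-link $\hat F_{y_i}$ (which may still meet $Y$) never enters the argument.

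With this in hand the proposition follows from the one-point reduction theorems recalled in the preliminaries. If $\underline{(R)^{-1}(U_y)}$ is homotopically trivial for every $y$, then each $y_i$ is a down $\gamma$-point of $Z_{i-1}$, giving $Z_{i-1}\diagup\hspace{-0.17 cm}\searrow Z_i$; composing the $n$ weak collapses yields $B(R)\diagup\hspace{-0.17 cm}\searrow X$. The induced weak homotopy equivalence, via McCord's theorem together with the fact that order complexes are CW, shows that $K(B(R))$ and $K(X)$ are (simple) homotopy equivalent. If moreover each $\underline{(R)^{-1}(U_y)}$ is collapsible, then it is in particular contractible, so each $y_i$ is an honest down weak point; removing a weak point is an elementary collapse, whence $Z_{i-1}\searrow Z_i$ and, after composition, $B(R)\searrow X$. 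Since a collapse of finite spaces induces a collapse of the associated order complexes, $K(B(R))\searrow K(X)$.

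I expect the only genuine obstacle to be the bookkeeping: one must check that a single linear-extension order makes the down-link identity hold at \emph{every} stage simultaneously, and that the reduction hypothesis is invoked at exactly the right strength, so that the homotopically-trivial case produces only a weak collapse $\diagup\hspace{-0.17 cm}\searrow$ while collapsibility upgrades it to a genuine collapse $\searrow$. Everything beyond this is a direct application of the standard finite-space machinery.
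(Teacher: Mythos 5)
Your reduction scheme is essentially the paper's own argument: the paper does not prove Proposition \ref{e} directly (it is quoted from \cite{Ximena}), but its proof of Proposition \ref{3} is exactly this argument applied to a composite relation, and there too one takes a linear extension of the far space, deletes its points one at a time, and identifies the down-link at each stage as $\hat U_{y_i}^{Z_{i-1}}=\underline{(R)^{-1}(U_{y_i})}$, so that the homotopically trivial hypothesis makes each deleted point a $\gamma$-point and yields $B(R)\diagup\hspace{-0.17 cm}\searrow X$. One gloss in your first case: the ``(simple)'' in the conclusion does not follow from McCord's theorem plus Whitehead's theorem, which only give a homotopy equivalence of $K(B(R))$ and $K(X)$; the simple homotopy equivalence is precisely what the $\gamma$-point machinery provides (removing a $\gamma$-point preserves the simple homotopy type of the order complex), which is why the conclusion is stated as a $\gamma$-collapse rather than as a weak equivalence.

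The genuine gap is in your collapsible case. You assert that if $\underline{(R)^{-1}(U_y)}$ is collapsible ``then it is in particular contractible, so each $y_i$ is an honest down weak point.'' For finite $T_0$-spaces this implication runs backwards: the correct (and strict) chain is contractible $\Rightarrow$ collapsible $\Rightarrow$ homotopically trivial, a distinction this very paper insists on (Proposition \ref{15}, Example \ref{15a}, Remark \ref{16}). Since a weak point requires a \emph{contractible} link (or half-link), a merely collapsible down-link does not make $y_i$ a weak point, so your chain of elementary finite-space collapses $Z_{i-1}\searrow Z_i$ is not established, and no appeal to ``collapse of finite spaces induces collapse of order complexes'' can start. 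The route the paper takes in Proposition \ref{3} avoids this entirely: collapsibility of $\hat U_{y_i}^{Z_{i-1}}$ implies collapsibility of the whole link $\hat C_{y_i}=\hat U_{y_i}\circledast\hat F_{y_i}$ (join with a collapsible space, Lemma 4.2.10 of \cite{Barmak}), and deleting a point with collapsible link induces a \emph{simplicial} collapse of order complexes (Remark 4.3.1 of \cite{Barmak}); composing over the linear extension gives $K(B(R))\searrow K(X)$. Note that this is also all the paper's own proof actually delivers: the finite-space collapse $B(R)\searrow X$ asserted in the statement would require the down-links to be contractible rather than collapsible, so your instinct that weak points are needed for that conclusion is right --- but it cannot be obtained by inverting the collapsible/contractible implication.
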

\begin{proposition}\cite{Ximena}\label{f}
    Let $R\subseteq X\times Y$ be a relation between two finite spaces. If $\overline{{(R)}(F_x)}$ is homotopically trivial for every $x\in X$ then $B(R)\diagup\hspace{-0.17 cm}\searrow Y$. In particular, the order complexes $K(B(R))$ and $K(Y)$ are (simple) homotopy equivalent. Moreover, if $\overline{{(R)}(F_x)}$ is collapsible then  $B(R)\searrow Y$ and hence $K(B(R))\searrow K(Y)$.
\end{proposition}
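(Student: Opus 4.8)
\section*{Proof proposal for Proposition \ref{f}}

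The plan is to mirror the argument behind Proposition \ref{e}, interchanging the roles of $X$ and $Y$, of the closure $\overline{(\cdot)}$ and the open hull $\underline{(\cdot)}$, and of the down weak points for the up ones: I would collapse the $X$-part of $B(R)$ onto $Y$ by deleting the points of $X$ one at a time, from the top of $X$ downward. Concretely, first fix a linear extension $x_1,x_2,\dots,x_n$ of the order of $X$ that is compatible with the \emph{opposite} order, that is, $x_i<x_j$ in $X$ forces $i>j$, so the maximal elements of $X$ receive the smallest indices. I would then delete $x_1,\dots,x_n$ in this order, setting $W_k:=B(R)\setminus\{x_1,\dots,x_{k-1}\}$, and maintain the invariant that at the moment $x=x_k$ is removed every point of $X$ lying strictly above $x$ has already been deleted (which holds by the choice of extension).

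The central computation is the identification, inside the current subspace $W_k$, of the up-set $\hat F_x^{W_k}=\{\,w\in W_k: w>x\,\}$. By the definition of $B(R)$ a point $y\in Y$ satisfies $x\le y$ precisely when $y\in\overline{R(F_x)}$, and this is an intrinsic order relation that does not depend on which other points are present in the subspace. Since all $X$-points above $x$ have been deleted while no point of $Y$ is ever removed, I obtain $\hat F_x^{W_k}=\overline{R(F_x)}$, carrying the subposet structure it inherits as a subspace of $Y$. Because the hypothesis forces $\overline{R(F_x)}$ to be collapsible (resp. homotopically trivial), it is in particular nonempty, so $x$ is not a maximal point of $W_k$ and is genuinely available for removal.

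Now I would feed in the hypothesis through the weak-point/$\gamma$-point dichotomy. If $\overline{R(F_x)}$ is collapsible then it is contractible, hence $\hat F_x^{W_k}$ is contractible and $x$ is an (up) weak point of $W_k$; thus $W_k\searrow W_{k+1}$, and composing over $k$ gives $B(R)\searrow Y$. If $\overline{R(F_x)}$ is merely homotopically trivial, then $\hat F_x^{W_k}$ is homotopically trivial, so $x$ is a $\gamma$-point of $W_k$, and its removal realizes a weak collapse $W_k\diagup\hspace{-0.17 cm}\searrow W_{k+1}$; composing yields $B(R)\diagup\hspace{-0.17 cm}\searrow Y$, whence $K(B(R))$ and $K(Y)$ are (simple) homotopy equivalent. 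To upgrade the finite-space collapses to genuine simplicial collapses of order complexes I would use the comparison criterion via links: at each stage $\mathrm{lk}_{K(W_k)}(x)=K(\hat U_x^{W_k})*K(\hat F_x^{W_k})=K(\hat U_x^{W_k})*K(\overline{R(F_x)})$, and since a join with a collapsible complex is collapsible, the link of $x$ is collapsible whenever $\overline{R(F_x)}$ is; this gives $K(W_k)\searrow K(W_{k+1})$ and hence $K(B(R))\searrow K(Y)$.

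The main obstacle, and the step deserving the most care, is precisely the identification $\hat F_x^{W_k}=\overline{R(F_x)}$: one must verify that deleting the higher $X$-points neither creates nor destroys comparabilities between $x$ and the surviving points, and that the collapse order is chosen so that this equality holds at \emph{every} stage rather than only for the first (maximal) points removed. Once this invariant is secured, the weak-point versus $\gamma$-point case analysis and the link-collapsibility computation are routine, and the dual structure with Proposition \ref{e} makes the remaining bookkeeping symmetric.
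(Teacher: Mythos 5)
Your overall strategy is the same one the paper uses: the paper does not reprove Proposition \ref{f} (it is quoted from \cite{Ximena}), but its proofs of the composite-relation analogues, Propositions \ref{3} and \ref{4}, follow exactly your scheme — remove the points of one side along a (reverse) linear extension, identify the relevant half-link of each removed point in the current subspace, invoke the $\gamma$-point criterion in the homotopically trivial case, and a collapsible-link criterion for the simplicial collapse. Your key identification $\hat F_x^{W_k}=\overline{R(F_x)}$, the nonemptiness observation, and the $\gamma$-point half of the argument are correct, and your link computation $\mathrm{lk}_{K(W_k)}(x)=K(\hat U_x^{W_k})*K(\overline{R(F_x)})$, giving $K(W_k)\searrow K(W_{k+1})$ and hence $K(B(R))\searrow K(Y)$, is precisely the mechanism the paper cites (Remark 4.3.1 and Lemma 4.2.10 of \cite{Barmak}).

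There is, however, one genuine error in your collapsible case: the inference ``$\overline{R(F_x)}$ collapsible $\Rightarrow$ $\overline{R(F_x)}$ contractible $\Rightarrow$ $x$ is an (up) weak point of $W_k$.'' For finite $T_0$-spaces, collapsibility does \emph{not} imply contractibility. A weak point in Barmak's sense requires $\hat U_x$ or $\hat F_x$ to be contractible \emph{as a finite space} (reducible by beat points), whereas collapsibility (reducibility by weak points) only guarantees weak homotopy triviality; the chain contractible $\subsetneq$ collapsible $\subsetneq$ homotopically trivial is strict for finite spaces. Your implication is valid only at the level of simplicial complexes (a collapsible complex is contractible), not for the finite subspace $\overline{R(F_x)}\subseteq Y$ itself — and indeed the gap between these notions for finite spaces is the central theme of this very paper (cf. Proposition \ref{12}, Remark \ref{16}, Examples \ref{12a} and \ref{15a}). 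Consequently your derivation of the finite-space collapses $W_k\searrow W_{k+1}$, and hence of $B(R)\searrow Y$, via weak points does not stand. What your (correct) link argument actually delivers is the simplicial collapse $K(B(R))\searrow K(Y)$ directly; this is also how the paper itself handles the collapsible case in Proposition \ref{3}, where only the collapsible-link criterion is used and no claim is made that the removed points are weak points. If the sub-claim $B(R)\searrow Y$ is to be justified as literally stated, it needs a different argument (or a broader convention of ``collapse'' for finite spaces, witnessed by removal of points with collapsible links); as written, that step is a gap, while the remaining conclusions of the proposition are correctly established by your proof.
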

\begin{corollary}\cite{Ximena}\label{g}
    Let $R\subseteq X\times Y$ be a relation between two finite spaces. If $\underline{{(R)}^{-1}(U_y)}$ and $\overline{{(R)}(F_x)}$ are homotopically trivial for every \( x \in X \), \( y \in Y \), then \( X \diagup\hspace{-0.17 cm}\searrow Y \). Moreover, if $\underline{{(R)}^{-1}(U_y)}$ and $\overline{{(R)}(F_x)}$ are collapsible, then \( K(X) \diagup\hspace{-0.17 cm}\searrow^n K(Y) \), with \( n = h(B(R)) \) being the height of the multiple cylinder of relations.
\end{corollary}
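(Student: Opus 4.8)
The plan is to use the single relation cylinder $B(R)$ as a common intermediate space and to apply Propositions \ref{e} and \ref{f} simultaneously, since the two hypotheses on $\underline{{(R)}^{-1}(U_y)}$ and $\overline{{(R)}(F_x)}$ are precisely the respective hypotheses of those propositions applied to the one poset $B(R)$. The entire argument then reduces to combining the two conclusions: for the homotopically trivial case, through transitivity of (simple) homotopy equivalence, and for the collapsible case, through a single expansion--collapse across $K(B(R))$ while bookkeeping dimensions.

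For the first assertion, I would argue as follows. Homotopical triviality of $\underline{{(R)}^{-1}(U_y)}$ for every $y \in Y$ is exactly the hypothesis of Proposition \ref{e}, which yields $B(R) \diagup\hspace{-0.17 cm}\searrow X$ and, in particular, that $K(B(R))$ and $K(X)$ are (simple) homotopy equivalent. Symmetrically, homotopical triviality of $\overline{{(R)}(F_x)}$ for every $x \in X$ is the hypothesis of Proposition \ref{f}, giving $B(R) \diagup\hspace{-0.17 cm}\searrow Y$ and hence that $K(B(R))$ and $K(Y)$ are (simple) homotopy equivalent. Since $\diagup\hspace{-0.17 cm}\searrow$ is reversible (expansions and collapses are mutually inverse) and transitive, composing through $B(R)$ gives $X \diagup\hspace{-0.17 cm}\searrow Y$, and correspondingly $K(X)$ and $K(Y)$ are (simple) homotopy equivalent.

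For the second assertion, I would invoke the stronger (collapsible) conclusions of the same two propositions. Collapsibility of $\underline{{(R)}^{-1}(U_y)}$ gives $B(R) \searrow X$, hence an honest collapse $K(B(R)) \searrow K(X)$; collapsibility of $\overline{{(R)}(F_x)}$ gives $B(R) \searrow Y$, hence $K(B(R)) \searrow K(Y)$. Reading the first collapse backwards presents $K(B(R))$ as an expansion of $K(X)$, so stringing the two together produces the formal deformation
\[
K(X) \nearrow K(B(R)) \searrow K(Y),
\]
which is exactly a witness for $K(X) \diagup\hspace{-0.17 cm}\searrow K(Y)$. The superscript $n$ then records the dimension through which this deformation passes: every cell introduced in the expansion and destroyed in the collapse is a simplex of $K(B(R))$, so the deformation remains within $\dim K(B(R))$. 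I would finish by identifying this dimension with the height: a maximal chain in $B(R)$ runs through a chain of $X$, crosses the added relation $x' < y'$, and continues through a chain of $Y$ (no edges return from $Y$ to $X$), so that $\dim K(B(R))$ is governed by $h(B(R))$, giving $K(X) \diagup\hspace{-0.17 cm}\searrow^n K(Y)$ with $n = h(B(R))$.

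The main obstacle I anticipate is the bookkeeping behind the superscript $n = h(B(R))$, rather than the existence of the deformation, which is already delivered by the two propositions. One must verify that the expansions and collapses inherited from Propositions \ref{e} and \ref{f} genuinely stay within dimension $h(B(R))$ — that no elementary expansion is forced to introduce a simplex of larger dimension — and that the height of $B(R)$, computed from maximal chains threading $X$, the added relations, and $Y$, coincides with $\dim K(B(R))$ under the height convention in force. Confirming that this is the sharp value $h(B(R))$ and not merely an upper bound is where the care is required.
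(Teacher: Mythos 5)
Your proposal is correct and is exactly the intended argument: the corollary is a cited preliminary (from \cite{Ximena}) whose derivation, like the paper's own analogous results (Propositions \ref{6}, \ref{7} and Remarks \ref{5}, \ref{10}), consists of applying Propositions \ref{e} and \ref{f} simultaneously to $B(R)$ and concatenating $K(X) \nearrow K(B(R)) \searrow K(Y)$, with the dimension bound coming from $\dim K(B(R)) = h(B(R))$ since simplices of the order complex are chains of the poset. Your worry about sharpness of $n$ is unnecessary: the notation $K(X) \diagup\hspace{-0.17 cm}\searrow^n K(Y)$ only asserts that some deformation stays within dimension $n$, which your zigzag through subcomplexes of $K(B(R))$ already guarantees.
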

    \noindent Barmak extended the concept of the non-Hausdorff mapping cylinder for a pair of finite spaces to a sequence of functions between finite $T_0$ spaces, and introduced the notion of a multiple non-Hausdorff mapping cylinder.
\begin{definition}\cite{Barmak}\label{h}
    Let $X_0, X_1, \dots, X_n$ be a sequence of finite $T_0$-spaces and let $f_0, f_1, \dots, f_{n-1}$ be a sequence of maps such that $f_i : X_i \to X_{i+1}$ or $f_i : X_{i+1} \to X_i$. If $f_i: X_i \to X_{i+1}$, we say that $f_i$ \textit{goes right}, and in the other case, we say that it \textit{goes left}. We define the \textbf{multiple non-Hausdorff mapping cylinder} $B(f_0, f_1, \dots, f_{n-1}; X_0, X_1, \dots, X_n)$ as follows:
    
    \noindent The underlying set is the disjoint union $\bigsqcup_{i=0}^n X_i$. We keep the given ordering in each copy $X_i$ and for $x$ and $y$ in different copies, we set $x < y$ in either of the following cases:
\begin{enumerate}
    \item If $x \in X_{2i}$, $y \in X_{2i+1}$ and $f_{2i}(x) \leq y$ or $x \leq f_{2i}(y)$.
    \item If $x \in X_{2i}$, $y \in X_{2i-1}$ and $f_{2i-1}(x) \leq y$ or $x \leq f_{2i-1}(y)$.
\end{enumerate}
\end{definition}
    \noindent Note that the multiple non-Hausdorff mapping cylinder coincides with the non-Hausdorff mapping cylinder (Definition \ref{b}) when $n=1$ and the unique map goes right.
    
    \noindent In \cite{Ximena}, we have studied the nerve theorem for complexes and for finite posets and many of its generalizations. In our main results, we will see some extended versions of these theorems based on a new cover.
\begin{definition}[\cite{Ximena}]\label{i}
	Let $\mathcal{U} = \{U_i\}_{i \in I}$ be a family of subsets of a set $X$. The \emph{nerve} of $\mathcal{U}$, denoted by $\mathcal{N}(\mathcal{U})$, is the simplicial complex whose simplices are the finite subsets $J \subseteq I$ such that
	\[
	\bigcap_{i \in J} U_i \neq \emptyset.
	\]
\end{definition}
\begin{definition}\cite{Ximena}\label{j}
     Let $K$ be a finite simplicial complex (or more generally, a regular CW-complex) and let $\mathcal{U} =\{L_i \}_{i\in I}$ be a family of subcomplexes which cover $K$ (i.e. $\bigcup_{i\in I} L_i = K$). If every intersection of elements of $\mathcal{U}$ is empty or contractible then $\mathcal{U}$ is said to be a good cover of $K$.
\end{definition}
\begin{theorem}\cite{Ximena}\label{k}
    Let $K$ be a finite simplicial complex (or more generally, a regular CW-complex) and let $\mathcal{U} =\{L_i \}_{i\in I}$ be a family of subcomplexes which cover $K$ (i.e. $\bigcup_{i\in I} L_i = K$). If $\mathcal{U}$ is a good cover of $K$ then $K$ and $\mathcal{N}(\mathcal{U})$ have the same (simple) homotopy type.
\end{theorem}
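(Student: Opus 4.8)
The plan is to transfer the statement into the category of finite $T_0$-spaces via the face-poset (McCord) correspondence and then realize both $K$ and its nerve as collapses of a single relation cylinder. Write $\mathcal{X}(K)$ for the face poset of $K$, the finite $T_0$-space whose points are the simplices of $K$ ordered by inclusion, and recall that its order complex $K(\mathcal{X}(K))$ is the barycentric subdivision of $K$, hence has the same simple homotopy type as $K$; the same applies to the nerve $N=\mathcal{N}(\mathcal{U})$. Thus it suffices to produce a simple homotopy equivalence between the finite spaces $\mathcal{X}(K)$ and $\mathcal{X}(N)$. To this end I would let $Y=\mathcal{X}(N)^{\mathrm{op}}$ be the face poset of $N$ ordered by reverse inclusion (which leaves the order complex, and hence the homotopy type of the nerve, unchanged) and set $X=\mathcal{X}(K)$. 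For $\sigma\in X$ put $J(\sigma)=\{i\in I:\sigma\in L_i\}$, which is nonempty since $\mathcal{U}$ covers $K$ and is a simplex of $N$ because $\sigma\in\bigcap_{i\in J(\sigma)}L_i$. Define the relation $R\subseteq X\times Y$ by $\sigma R J\iff \sigma\in\bigcap_{i\in J}L_i$ (equivalently $J\subseteq J(\sigma)$), and form the relation cylinder $B(R)$ of Definition \ref{d}.

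The heart of the argument is to check the two collapsing hypotheses of Propositions \ref{e} and \ref{f} for this $R$; these are exactly where the good-cover assumption enters. For the hypothesis of Proposition \ref{e} I would compute, for a simplex $J\in N$, the set $R^{-1}(U_J)$. Because $Y$ carries the reverse order, $U_J$ consists of the cofaces $J'\supseteq J$ of $J$ in $N$, and since enlarging $J'$ only shrinks $\bigcap_{i\in J'}L_i$, one gets $R^{-1}(U_J)=\{\sigma:\sigma\in\bigcap_{i\in J}L_i\}=\mathcal{X}(\bigcap_{i\in J}L_i)$. As $\bigcap_{i\in J}L_i$ is a subcomplex it is already a down-set in $\mathcal{X}(K)$, so the open hull does not enlarge it and $\underline{R^{-1}(U_J)}=\mathcal{X}(\bigcap_{i\in J}L_i)$. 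By the good-cover hypothesis this intersection is nonempty and contractible, whence its face poset is homotopically trivial; Proposition \ref{e} then yields $B(R)\diagup\hspace{-0.17 cm}\searrow X$ (and a genuine collapse $B(R)\searrow X$ when the intersections are collapsible, as in the strong-good case).

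For the hypothesis of Proposition \ref{f} I would compute $R(F_\sigma)$ for $\sigma\in X$. Since $\tau\supseteq\sigma$ forces $J(\tau)\subseteq J(\sigma)$, the largest index set is attained at $\tau=\sigma$, giving $R(F_\sigma)=\{J\in N:J\subseteq J(\sigma)\}$, the poset of nonempty faces of the simplex $J(\sigma)$. This set is already a down-set for inclusion, so in the reverse order of $Y$ its closure is itself, and it has the extremal element $J(\sigma)$; a finite poset with a maximum (or minimum) is collapsible, and in particular homotopically trivial. Proposition \ref{f} then gives $B(R)\diagup\hspace{-0.17 cm}\searrow Y$. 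Combining the two collapses (equivalently, invoking Corollary \ref{g}) shows that $\mathcal{X}(K)$ and $\mathcal{X}(N)$ are simple homotopy equivalent, and translating through order complexes and barycentric subdivisions gives that $K$ and $\mathcal{N}(\mathcal{U})$ share the same (simple) homotopy type.

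I expect the main obstacle to be bookkeeping the variance of the two sides so that the contractible intersections actually appear as the fibers: the assignment $J\mapsto\bigcap_{i\in J}L_i$ reverses inclusions, so a naive relation makes the hypotheses of Propositions \ref{e} and \ref{f} produce unions of the $L_i$ (which are not controlled by a good cover) rather than intersections. Passing to the opposite order on $\mathcal{X}(N)$ — harmless at the level of order complexes — is the device that converts the down-set $U_J$ into a coface set and thereby exposes the full intersection $\bigcap_{i\in J}L_i$ as $R^{-1}(U_J)$; verifying carefully that this set is already down-closed (so that $\underline{\;\cdot\;}$ does not enlarge it) and that the $N$-side fibers reduce to cones with apex $J(\sigma)$ is the only genuinely delicate point. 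The regular CW-complex case is identical once $\mathcal{X}(K)$ is taken to be the face poset of the regular CW structure, whose order complex is again homeomorphic to $K$.
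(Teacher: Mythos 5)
Your proposal is correct, and it is essentially the argument behind the cited result: the paper itself quotes Theorem \ref{k} from \cite{Ximena} without proof, but both the original source and the paper's own proof of the analogous Theorem \ref{21} proceed exactly as you do, by forming the membership relation between the face poset of $K$ and the (opposite) face poset of the nerve, identifying one family of fibers with the face posets of the intersections $\bigcap_{i\in J}L_i$ (homotopically trivial by the good-cover hypothesis) and the other with cones having apex $J(\sigma)$, and then invoking Propositions \ref{e}, \ref{f} and Corollary \ref{g}. Your handling of the variance issue (passing to $\mathcal{X}(N)^{\mathrm{op}}$ so that $U_J$ becomes the coface set and $R^{-1}(U_J)$ the full intersection) matches the device used in the paper's proof of Theorem \ref{21}, where the relation is likewise taken into $N_0(\mathcal{U})^{\mathrm{op}}$.
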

\begin{definition}[\cite{Ximena}]\label{l}
	Let $\mathcal{U}$ be a finite open cover of a topological space. The poset $\chi(\mathcal{U}) = \chi(\mathcal{N}(\mathcal{U}))$, where $\mathcal{N}(\mathcal{U})$ denotes the nerve of $\mathcal{U}$ and $\chi(\mathcal{N}(\mathcal{U}))$ denotes its face poset, is called the \emph{non-Hausdorff nerve} of $\mathcal{U}$.
\end{definition}
\begin{definition}\cite{Ximena}\label{m}
     Let $X$ be a finite space and let $\mathcal{U} =\{U_i \}_{i\in I}$ be a family of open subspaces which cover $X$ (i.e. $\bigcup_{i\in I} U_i = X$). If every intersection of elements of $\mathcal{U}$ is empty or homotopically trivial then $\mathcal{U}$ is said to be a good cover of $X$.
\end{definition}
\begin{theorem}\cite{Ximena}\label{n}
    Let $X$ be a finite space and let $\mathcal{U} =\{U_i \}_{i\in I}$ be a family of open subspaces which cover $X$ (i.e. $\bigcup_{i\in I} U_i = X$). If $\mathcal{U}$ is a good cover of $X$ then $X$ and $\chi(\mathcal{U})$ have the same (simple) homotopy type.
\end{theorem}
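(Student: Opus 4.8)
The plan is to exhibit $X$ and its non-Hausdorff nerve $\chi(\mathcal{U})$ as the two ends of a single relation cylinder and to collapse that cylinder onto each of them, using Propositions~\ref{e} and~\ref{f} (packaged by Corollary~\ref{g}). Concretely, I would take $Y=\chi(\mathcal{U})$, whose points are the simplices $\sigma$ of $\mathcal{N}(\mathcal{U})$, i.e.\ the nonempty finite $\sigma\subseteq I$ with $V_\sigma:=\bigcap_{i\in\sigma}U_i\neq\emptyset$, and define a relation $R\subseteq X\times\chi(\mathcal{U})$ by $x\mathrel{R}\sigma$ iff $x\in V_\sigma$. Forming the relation cylinder $B(R)$ of Definition~\ref{d}, the point is that the good cover hypothesis (Definition~\ref{m}) says precisely that each nonempty $V_\sigma$ is homotopically trivial, so the whole argument rests on showing that this one hypothesis forces both collapse conditions. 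It will also be convenient to write $\sigma(x):=\{i\in I: x\in U_i\}$ for the support of $x$; since $\mathcal{U}$ covers $X$, this is a nonempty simplex of $\mathcal{N}(\mathcal{U})$ and $x\in V_\sigma$ holds exactly when $\sigma\subseteq\sigma(x)$.

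First I would verify the hypothesis of Proposition~\ref{e}, that $\underline{R^{-1}(U_\sigma)}$ is homotopically trivial for every $\sigma\in\chi(\mathcal{U})$, where $U_\sigma$ denotes the minimal open set of $\sigma$. Orienting $\chi(\mathcal{U})$ so that the minimal open set of $\sigma$ consists of its cofaces $\{\tau:\tau\supseteq\sigma\}$, the key computation is that $R^{-1}$ collapses the nested family $\{V_\tau\}_{\tau\supseteq\sigma}$ to its largest member: because $\tau\supseteq\sigma$ forces $V_\tau\subseteq V_\sigma$, the preimage reduces to $V_\sigma$ itself. As $V_\sigma$ is open in $X$ it equals its own open hull, so $\underline{R^{-1}(U_\sigma)}=V_\sigma$ is homotopically trivial by the good cover condition. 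Proposition~\ref{e} then yields $B(R)\diagup\hspace{-0.17 cm}\searrow X$, whence $K(B(R))$ and $K(X)$ are simple homotopy equivalent.

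Next I would verify the hypothesis of Proposition~\ref{f}, that $\overline{R(F_x)}$ is homotopically trivial for every $x\in X$. The observation here is that membership $x\in V_\sigma$ propagates along the order inside the closure $F_x$ (each $U_i$ is open), so that $R(F_x)=\{\sigma:\sigma\subseteq\sigma(x)\}$ is exactly the set of faces of $\sigma(x)$. This set is already closed in $\chi(\mathcal{U})$ and has $\sigma(x)$ as a maximum; a finite poset with a maximum is collapsible, hence homotopically trivial, so Proposition~\ref{f} gives $B(R)\diagup\hspace{-0.17 cm}\searrow\chi(\mathcal{U})$ and thus $K(B(R))\simeq K(\chi(\mathcal{U}))$ as (simple) homotopy types. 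Chaining the two equivalences through $K(B(R))$, or equivalently invoking Corollary~\ref{g} directly with these two verifications, shows that $K(X)$ and $K(\chi(\mathcal{U}))$ have the same simple homotopy type, which is the assertion of the theorem. Note that only $V_\sigma$ homotopically trivial (not collapsible) is available from a good cover, so the conclusion is the simple-homotopy-equivalence statement rather than an honest collapse.

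The step I expect to be the main obstacle is the order-convention bookkeeping that makes the two computations land on the intersections $\bigcap_{i\in\sigma}U_i$ rather than on uncontrolled unions. Depending on whether the open sets of a finite $T_0$-space are taken as down-sets or up-sets, and on the orientation chosen for the face poset, the naive version of the relation instead produces $R^{-1}(U_\sigma)=\bigcup_{i\in\sigma}U_i$, which need not be homotopically trivial under a good cover. The remedy is to align the membership relation with the order so that the preimage of a minimal open set selects the \emph{largest} member of the nested family $\{V_\tau\}$; if the ambient convention forces minimal open sets to be faces rather than cofaces, one simply replaces $\chi(\mathcal{U})$ by its opposite poset, which has the identical order complex and therefore the same (simple) homotopy type. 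Once this orientation is fixed, the remaining identifications of $R^{-1}(U_\sigma)$ with $V_\sigma$ and of $\overline{R(F_x)}$ with the (cone-like) set of faces of $\sigma(x)$ are routine.
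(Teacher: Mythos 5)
Your proposal is correct and follows essentially the same route as the cited source and as this paper's own proof of the collapsible variant (Theorem~\ref{21}): the membership relation $x \, R \, \sigma \iff x \in \bigcap_{i \in \sigma} U_i$ into the (opposite of the) non-Hausdorff nerve, with Propositions~\ref{e} and~\ref{f} (via Corollary~\ref{g}) collapsing $B(R)$ onto both ends. Your handling of the order convention --- passing to $\chi(\mathcal{U})^{\mathrm{op}}$ so that $\underline{R^{-1}(U_\sigma)}$ computes to the intersection $\bigcap_{i\in\sigma}U_i$ rather than a union, then using that a poset and its opposite have the same order complex --- is exactly the move made in the paper's proof of Theorem~\ref{21}.
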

    \noindent Two finite simplicial complexes $K$ and $L$ are homotopy equivalent if and only if their associated face posets $\chi(K)$ and $\chi(L)$ are weak homotopy equivalent. Similarly, two finite topological spaces $X$ and $Y$ are weak homotopy equivalent if and only if their associated order complexes $K(X)$ and $K(Y)$ are homotopy equivalent. Therefore, the nerve theorem for simplicial complexes and the nerve theorem for finite spaces are equivalent in this context.
    
    \noindent The following version of the nerve theorem for finite spaces, where the cover is not required to be good, was given by Ximena.
\begin{theorem}\cite{Ximena}\label{20}
	Let $X$ be a finite topological space and let $\mathcal{U} = \{U_i\}_{i \in I}$ be an open cover of $X$. Let $N_0(\mathcal{U})$ be the subspace of the non-Hausdorff nerve $N(\mathcal{U})$ consisting of all homotopically trivial intersections. If for every $x \in X$, the subspace $\mathcal{I}_x$ of $N_0(\mathcal{U})$ of the intersections which contain $x$, is homotopically trivial, then $X$ has the same simple homotopy type as $N_0(\mathcal{U})$.
\end{theorem}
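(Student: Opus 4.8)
The plan is to exhibit a single intermediate finite $T_0$-space that is ``glued'' from $X$ and $N_0(\mathcal U)$ along the membership relation, and to show that both of its projections are weak homotopy equivalences whose fibers are governed by the two families of homotopically trivial sets at hand. Concretely, let $R\subseteq X\times N_0(\mathcal U)$ be the relation $x\,R\,J$ iff $x\in U_J:=\bigcap_{i\in J}U_i$, and work with the cylinder of this relation; the most transparent model is the comma-type poset $\mathcal Z=\{(x,J): J\in N_0(\mathcal U),\ x\in U_J\}$ ordered by $(x,J)\le (x',J')$ iff $x\le x'$ in $X$ and $J\subseteq J'$ in $N_0(\mathcal U)$ (this is the relation cylinder $B(R)$ of Definition~\ref{d} presented as a subspace of the product). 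It carries two order-preserving projections $p\colon\mathcal Z\to X$ and $q\colon\mathcal Z\to N_0(\mathcal U)$, and the goal is to see that each is a weak equivalence and, better, that $\mathcal Z$ weak-collapses onto each end.

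First I would handle the projection $q$ onto $N_0(\mathcal U)$, which is the end where triviality is automatic. For $J_0\in N_0(\mathcal U)$ the minimal-closed-set preimage $q^{-1}(F_{J_0})=\{(x,J): J\supseteq J_0,\ x\in U_J\}$ deformation retracts onto the strict fibre $\{(x,J_0):x\in U_{J_0}\}\cong U_{J_0}$ via $(x,J)\mapsto (x,J_0)$, a map comparable to the identity (note $x\in U_J\subseteq U_{J_0}$). Since $J_0\in N_0(\mathcal U)$ means exactly that $U_{J_0}$ is homotopically trivial, every such fibre is homotopically trivial, and McCord's theorem makes $q$ a weak homotopy equivalence.

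Next I would treat $p$ onto $X$, which is where the hypothesis enters. For $x_0\in X$ the up-fibre $p^{-1}(F_{x_0})=\{(x,J): x\ge x_0,\ x\in U_J\}$ deformation retracts onto $\{(x_0,J): J\in N_0(\mathcal U),\ x_0\in U_J\}\cong\mathcal I_{x_0}$ via $(x,J)\mapsto (x_0,J)$ (again comparable to the identity, using that $U_J$ is down-closed so $x\ge x_0$ and $x\in U_J$ force $x_0\in U_J$). This identifies the fibre with precisely the subspace $\mathcal I_{x_0}$ of trivial intersections containing $x_0$, which is homotopically trivial by hypothesis; McCord's theorem then makes $p$ a weak equivalence. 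Combining the two projections yields $X\xleftarrow{\ p\}\mathcal Z\xrightarrow{\ q\ }N_0(\mathcal U)$ with both maps weak equivalences, so $X$ and $N_0(\mathcal U)$ are weakly homotopy equivalent.

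I expect the genuine obstacle to be upgrading this to the asserted \emph{simple} homotopy type, since homotopical triviality of the fibres gives only weak contractibility and not collapsibility. The simple-homotopy statement must be drawn from the weak-collapse mechanism behind Propositions~\ref{e} and~\ref{f} and Corollary~\ref{g}: one shows that the points of $\mathcal Z$ lying strictly over the interior of each fibre can be deleted one at a time as $\gamma$-points (points with homotopically trivial link), so that $\mathcal Z\diagup\hspace{-0.17 cm}\searrow X$ and $\mathcal Z\diagup\hspace{-0.17 cm}\searrow N_0(\mathcal U)$, and weak collapses induce simple homotopy equivalences on order complexes. The delicate point is to verify that the $\gamma$-point condition holds throughout a removal order controlled by the trivial sets $U_J$ (for the collapse onto $X$) and $\mathcal I_x$ (for the collapse onto $N_0(\mathcal U)$), so that the single space $\mathcal Z$ collapses coherently onto \emph{both} ends; this bookkeeping, rather than any single homotopy computation, is where the real work lies.
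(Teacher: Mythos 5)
Your first two steps are correct and give a clean proof that $X$ and $N_0(\mathcal{U})$ are \emph{weakly} homotopy equivalent: the graph poset $\mathcal{Z}=\{(x,J):x\in U_J\}$ with its two projections, together with the dual form of McCord's theorem applied to the closed fibres $q^{-1}(F_{J_0})\simeq U_{J_0}$ and $p^{-1}(F_{x_0})\simeq \mathcal{I}_{x_0}$, is essentially McCord's classical nerve argument. But the theorem asserts the same \emph{simple} homotopy type, and that is precisely the part you leave open; worse, the route you sketch for it cannot work as stated. Your $\mathcal{Z}$ is \emph{not} the relation cylinder $B(R)$ of Definition~\ref{d}: $B(R)$ has underlying set the disjoint union $X\sqcup N_0(\mathcal{U})$, with the order generated by $x\le J$ when $x\,R\,J$, whereas $\mathcal{Z}$ is a subposet of the product with one point per incidence pair $(x,J)$. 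In particular, neither $X$ nor $N_0(\mathcal{U})$ is a subspace of $\mathcal{Z}$, so the proposed weak collapses $\mathcal{Z}\diagup\hspace{-0.17 cm}\searrow X$ and $\mathcal{Z}\diagup\hspace{-0.17 cm}\searrow N_0(\mathcal{U})$ are not even well formed: a weak collapse deletes $\gamma$-points one at a time and must terminate at a \emph{subspace} of the space one started with. The $\gamma$-point machinery of Propositions~\ref{e} and~\ref{f} is tied to the two-sided structure of $B(R)$, and no bookkeeping on $\mathcal{Z}$ can substitute for it.

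The proof that actually delivers the statement (the one in \cite{Ximena}, reproduced in this paper for the collapsible analogue, Theorem~\ref{21}) takes the relation into the \emph{opposite} poset: define $R\subseteq X\times N_0(\mathcal{U})^{\mathrm{op}}$ by $x\,R\,J \iff x\in\mathcal{I}_J$ and form the honest cylinder $B(R)$, which contains both $X$ and $N_0(\mathcal{U})^{\mathrm{op}}$ as subspaces. One then computes $\underline{R^{-1}(U_J)}=\mathcal{I}_J$, homotopically trivial because $J\in N_0(\mathcal{U})$, and $\overline{R(F_x)}=\mathcal{I}_x$, homotopically trivial by hypothesis (both computations use that the $U_i$ are open, i.e.\ down-sets, exactly as in your fibre identifications). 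Propositions~\ref{e} and~\ref{f} then give $B(R)\diagup\hspace{-0.17 cm}\searrow X$ and $B(R)\diagup\hspace{-0.17 cm}\searrow N_0(\mathcal{U})^{\mathrm{op}}$, and weak collapses induce simple homotopy equivalences of order complexes; since $K(N_0(\mathcal{U})^{\mathrm{op}})=K(N_0(\mathcal{U}))$, the claimed simple homotopy equivalence follows. Note also why passing to the opposite poset is unavoidable in the cylinder formulation: if $R$ landed in $N_0(\mathcal{U})$ itself, then $R^{-1}(U_J)$ would be the union of the sets $\mathcal{I}_{J'}$ over $J'\subseteq J$, which need not be homotopically trivial. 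Your product model dodged this by working with closed fibres, but the cylinder argument, which is what carries the simple homotopy conclusion, cannot.
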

\noindent The following theorem is the simplicial complex counterpart of Theorem \ref{20}, originally stated by Ximena as an analog of Theorem \ref{20} without proof. We include the full proof here for completeness.
\begin{theorem}\label{25'}\cite{Ximena}
	Let \( K \) be a finite simplicial complex (or regular cell complex), and let \( U = \{ L_i \}_{i \in I} \) be a finite family of subcomplexes such that \( \bigcup_{i \in I} L_i = K \). Let \( \mathcal{N}_0(U) \) be the subcomplex of the nerve \( \mathcal{N}(U) \) consisting of simplices representing chains of contractible intersections. If, for every simplex \( \sigma \in K \), the subcomplex \( S_\sigma \subseteq \mathcal{N}_0(U) \), formed by all chains whose members contain \( \sigma \), is contractible, then \( K \) has the same simple homotopy type as \( \mathcal{N}_0(U) \).
\end{theorem}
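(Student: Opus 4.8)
The plan is to transfer the statement for finite spaces (Theorem~\ref{20}) across the standard dictionary between finite $T_0$-spaces and finite simplicial complexes, using the equivalences $K \mapsto \chi(K)$ (face poset) and $X \mapsto K(X)$ (order complex). The key observation is that $\mathcal{N}_0(U)$ as defined here is precisely the simplicial complex whose face poset is McCord's $N_0$ of the open cover $\mathcal{U} = \{ U_i \}$ obtained by applying the order-complex/open-star correspondence to the subcomplexes $L_i$. Concretely, first I would associate to the cover $U = \{L_i\}$ of $K$ the cover of the associated finite space: passing to $\chi(K)$, each subcomplex $L_i$ determines an open subspace, and a chain of subcomplexes has contractible (equivalently, for finite spaces, homotopically trivial) intersection exactly when the corresponding intersection of open sets is homotopically trivial. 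This identifies $\mathcal{N}_0(U)$ with $\mathcal{N}_0$ of the finite-space cover, and identifies $S_\sigma$ with the subspace $\mathcal{I}_x$ of Theorem~\ref{20} where $x$ is the vertex of $\chi(K)$ corresponding to $\sigma$.

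Second, I would verify that the hypotheses match under this correspondence. The condition that every intersection along a chain be \emph{contractible} for complexes corresponds to \emph{homotopically trivial} for the associated finite spaces (since a finite space is homotopically trivial iff its order complex is contractible, and weak homotopy equivalence is what McCord's theorem delivers). Likewise, the hypothesis that $S_\sigma$ is contractible for every simplex $\sigma$ translates to $\mathcal{I}_x$ being homotopically trivial for every point $x$, which is exactly the hypothesis of Theorem~\ref{20}. Once the hypotheses are aligned, Theorem~\ref{20} yields that the finite space has the same simple homotopy type as $N_0(\mathcal{U})$, and then I would pull this back: simple homotopy equivalence of finite spaces corresponds, via order complexes, to simple homotopy equivalence of the complexes, so $K$ and $\mathcal{N}_0(U)$ share a simple homotopy type.

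Alternatively—and this may be cleaner to write—I would give a direct proof mirroring the finite-space argument of Theorem~\ref{20}, replacing ``homotopically trivial'' with ``contractible'' and ``finite space'' with ``simplicial complex'' throughout, using the relation cylinder machinery (Propositions~\ref{e},~\ref{f} and Corollary~\ref{g}) applied to the natural relation between $K$ (or a suitable subdivision/face poset) and $\mathcal{N}_0(U)$. In this approach I would build the relation $R$ encoding incidence between simplices of $K$ and the contractible intersections indexing $\mathcal{N}_0(U)$, show that $\underline{R^{-1}(U_y)}$ corresponds to the contractible intersections and $\overline{R(F_x)}$ corresponds to $S_\sigma$, and then invoke the collapsibility/homotopical-triviality conclusions of those propositions to obtain the collapse onto $\mathcal{N}_0(U)$.

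The main obstacle I anticipate is bookkeeping at the level of subdivisions: the correspondence between a simplicial complex and its face poset involves passing to barycentric subdivision ($K(\chi(K)) = \mathrm{sd}\, K$), so I must be careful that contractibility of intersections and of the $S_\sigma$ is preserved under this passage and that the simple homotopy type (not merely weak or ordinary homotopy type) is genuinely tracked, since simple homotopy equivalence is not invariant under arbitrary subdivision in a naive sense. Ensuring that the collapses assemble into an honest simple homotopy equivalence between $K$ and $\mathcal{N}_0(U)$—rather than just a weak equivalence—will require invoking that elementary collapses correspond under the order-complex functor, which is where the results from \cite{Barmak} and the relation-cylinder propositions above do the real work.
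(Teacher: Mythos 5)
Your main proposed route is essentially identical to the paper's proof: both apply the face-poset functor $\chi$ to turn $U=\{L_i\}$ into an open cover of $\chi(K)$, identify contractibility of the intersections and of the $S_\sigma$ with homotopical triviality of their images $\chi(\mathcal{N}_0(U))$ and $\chi(S_\sigma)$, invoke Theorem~\ref{20}, and conclude using that $\chi$ preserves simple homotopy type. Your remarks on subdivision bookkeeping and the alternative relation-cylinder argument are extra care the paper does not spell out, but the core argument coincides.
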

\begin{proof}
	Let \( \mathcal{U} = \{L_i\}_{i \in I} \) be a finite cover of \( K \) by subcomplexes. Applying the finite space functor \( \chi \), we obtain an open cover \( \mathcal{U}_\chi = \chi\{L_i\}_{i \in I} \) of the finite space \( \chi(K) \).
	
	\noindent By definition, the nerve \( \mathcal{N}(U) \) is a simplicial complex encoding the pattern of intersections among the \( L_i \), and \( \mathcal{N}_0(U) \) is the subcomplex consisting only of those simplices corresponding to chains of contractible intersections. Then, in the finite space setting, the subspace \( \chi(\mathcal{N}_0(U)) \subseteq \chi(N(\mathcal{U})) \) consists of points representing homotopically trivial intersections.
	
	\noindent Now, fix a simplex \( \sigma \in K \). The subcomplex \( S_\sigma \subset \mathcal{N}_0(U) \), consisting of simplices in the nerve whose members all contain \( \sigma \), is contractible by assumption. Therefore, the associated finite space \( \chi(S_\sigma) \subseteq \chi(\mathcal{N}_0(U)) \) is homotopically trivial.
	
	\noindent Applying Theorem \ref{20}, which provides a criterion for simple homotopy equivalence between a finite space and a suitable subspace of its nerve, we conclude that \( \chi(K) \diagup\hspace{-0.17 cm}\searrow \chi(\mathcal{N}_0(U)) \) via simple homotopy equivalence. Since \( \chi \) preserves simple homotopy type, we obtain that the original complex \( K \) and the subcomplex \( \mathcal{N}_0(U) \) share the same simple homotopy type. This completes the proof.
\end{proof}

 \begin{theorem}\label{52}\cite{Ximena}
	Let $X$ be a finite poset of height less or equal to $n$ and let $U = \{ U_i \}_{i \in I}$ be an open cover of $X$ such that $\tilde{N}_0(U)$, the subspace of the reduced non-Hausdorff nerve $\tilde{N}(U)$ of collapsible intersections, is again of height less or equal to $n$. If for every $x \in X$, the subspace $\mathcal{I}_x$ of $\tilde{N}_0(U)$ of the intersections which contain $x$ is collapsible and its height is less or equal than $n - h_X(x)$, then $X \diagup\hspace{-0.17 cm}\searrow^{n+1} \tilde{N}_0(U)$.
\end{theorem}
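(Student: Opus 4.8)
The plan is to treat this statement as the collapse-theoretic refinement of Theorem \ref{20}, upgrading every instance of ``homotopically trivial'' in that argument to ``collapsible'' and then carefully bookkeeping the height of the intermediate relation cylinder. Concretely, I would introduce the relation $R \subseteq X \times \tilde{N}_0(U)$ defined by $x\,R\,\sigma$ if and only if $x$ lies in the intersection $U_\sigma$ represented by the simplex $\sigma$, and form its relation cylinder $B(R)$ (Definition \ref{d}). This $B(R)$ is a finite $T_0$-space containing both $X$ and $\tilde{N}_0(U)$ as subspaces, and the strategy is to show that $B(R)$ collapses onto each of them, so that the two are joined by a formal deformation whose length is controlled by $h(B(R))$, exactly in the spirit of Corollary \ref{g}.

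First I would establish the collapse $B(R)\searrow \tilde{N}_0(U)$ using the ``moreover'' clause of Proposition \ref{f}. For this it suffices to show that the set $\overline{R(F_x)}$ appearing there is, by the defining order of the cylinder, governed by the subspace $\mathcal{I}_x$ of intersections containing $x$: since $x\,R\,\sigma$ unwinds to $x\in U_\sigma$ and $\mathcal{I}_x=\{\sigma : x\in U_\sigma\}$ is exactly the hypothesized collapsible subspace, the collapsibility condition of Proposition \ref{f} is met and we obtain $B(R)\searrow \tilde{N}_0(U)$. Symmetrically, I would establish $B(R)\searrow X$ via Proposition \ref{e}, where the relevant sets $\underline{R^{-1}(U_\sigma)}$ are unions of the opens $U_\tau$ over faces $\tau\le\sigma$; here collapsibility should be forced by the defining property of $\tilde{N}_0(U)$, namely that its simplices record collapsible intersections (with a Mayer--Vietoris-style induction over the faces of the simplex $\sigma$, whose own nerve is a simplex, supplying the inductive step). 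Combining the two collapses yields $X \diagup\hspace{-0.17 cm}\searrow \tilde{N}_0(U)$.

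The main obstacle, and the step that actually consumes the height hypotheses, is to promote this bare simple-homotopy equivalence to the quantitative statement $X \diagup\hspace{-0.17 cm}\searrow^{\,n+1} \tilde{N}_0(U)$. By Corollary \ref{g} the exponent is controlled by $h(B(R))$, so it is enough to prove $h(B(R)) \le n+1$. I would do this by analyzing a maximal chain of $B(R)$ that crosses from the $X$-part into the $\tilde{N}_0(U)$-part: such a chain has the form $x_0 < \cdots < x_k < \sigma_0 < \cdots < \sigma_m$, where the initial segment contributes at most $h_X(x_k)$ and the terminal segment should live inside $\mathcal{I}_{x_k}$. The balance hypothesis $h(\mathcal{I}_x)\le n-h_X(x)$ is tailored precisely so that $k + m + 1 \le h_X(x_k) + (n - h_X(x_k)) + 1 = n+1$, while chains lying entirely in $X$ or entirely in $\tilde{N}_0(U)$ are bounded by $n$ by assumption. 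The delicate point I expect to fight with is verifying that the terminal segment of a crossing chain genuinely stays within the budget dictated by $\mathcal{I}_{x_k}$ rather than by some lower $\mathcal{I}_{x_j}$, since the order generated in the cylinder (Definition \ref{d}) can a priori place $x_k$ below simplices whose relation to $x_k$ only passes through a face; reconciling this generated order with the subspace $\mathcal{I}_{x_k}$, possibly by fixing the correct orientation convention on the non-Hausdorff nerve, is where the real work lies. Once that compatibility certifies $h(B(R))\le n+1$, Corollary \ref{g} delivers $X \diagup\hspace{-0.17 cm}\searrow^{\,n+1}\tilde{N}_0(U)$.
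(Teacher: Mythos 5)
Your scaffolding --- form the relation cylinder $B(R)$ of the relation $x\,R\,V \iff x\in V$, collapse it onto both $X$ and $\tilde N_0(U)$ via Propositions \ref{e} and \ref{f}, and then bound $h(B(R))\le n+1$ to feed Corollary \ref{g} --- is exactly the intended argument: the paper itself quotes Theorem \ref{52} from \cite{Ximena} without proof, but this is precisely the scheme it uses to prove the companion Theorem \ref{21}. However, the point you postpone as ``fixing the correct orientation convention'' is not a loose end; it is the one idea the proof actually requires, and without it two of your three steps fail. If the nerve carries its face-poset order (simplices ordered by inclusion of index sets), then the minimal open set of $\sigma$ consists of the \emph{faces} of $\sigma$, whose associated intersections are \emph{larger} than $U_\sigma$; hence $\underline{R^{-1}(U_\sigma)}$ is the union $\bigcup_{\tau\le\sigma,\ \tau\in\tilde N_0(U)} U_\tau$, and nothing in the hypotheses makes this collapsible. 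Your Mayer--Vietoris-style patch cannot rescue it: gluing collapsible pieces into a collapsible union (in the spirit of Lemmas \ref{lem:union-collapsible} and \ref{lem:finite-union-collapsible}) needs the pairwise intersections $U_\tau\cap U_{\tau'}=U_{\tau\cup\tau'}$ to be collapsible, and these are exactly intersections that may fail to lie in $\tilde N_0(U)$. The same defect breaks your height count: with the face-poset order, $x_k<\sigma_0$ in $B(R)$ only forces $x_k$ into the intersection of some \emph{face} of $\sigma_0$, so the terminal segment $\sigma_0<\cdots<\sigma_m$ of a crossing chain need not lie in $\mathcal{I}_{x_k}$, which is the failure you yourself anticipated.

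The missing idea is to reverse the order: define $R\subseteq X\times \tilde N_0(U)^{\mathrm{op}}$, i.e.\ order the reduced nerve by inclusion of the actual intersections, exactly as in the paper's proof of Theorem \ref{21}. Then every step you want holds on the nose. The minimal open set of $V$ now consists of the intersections $W\subseteq V$, so $\underline{R^{-1}(U_V)}=V$, collapsible by the very definition of $\tilde N_0(U)$ (no induction needed). Since each $V$ is open, hence down-closed in $X$, one gets $R(F_x)=\{V: x\in V\}=\mathcal{I}_x$, and its closure adds only larger intersections, which still contain $x$; so $\overline{R(F_x)}=\mathcal{I}_x$, collapsible by hypothesis. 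For a crossing chain $x_0<\cdots<x_k<V_0<\cdots<V_m$, the relation $x_k<V_0$ forces $x_k\in W\subseteq V_0$ for some $W$, hence $x_k\in V_j$ for every $j$, so the terminal segment is a chain in $\mathcal{I}_{x_k}$ and $k+m+1\le h_X(x_k)+\bigl(n-h_X(x_k)\bigr)+1=n+1$; together with the two height hypotheses on $X$ and $\tilde N_0(U)$ this gives $h(B(R))\le n+1$. Corollary \ref{g} then yields $X\diagup\hspace{-0.17 cm}\searrow^{n+1}\tilde N_0(U)^{\mathrm{op}}$, and since a poset and its opposite have the same order complex and collapse together, $\tilde N_0(U)^{\mathrm{op}}\diagup\hspace{-0.17 cm}\searrow\tilde N_0(U)$, completing the proof. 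In short: right strategy, but the argument hinges on the opposite-order convention, and the fallback you propose for the face-poset order would not go through.
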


\section{Main Results}
\subsection{Multiple Cylinder of Relations}

    Here, we define the multiple cylinder of the relations, which is a generalization of the relation cylinder (Definition \ref{d}).
\begin{definition}\label{1}
    Let $X_0,X_1,\dots,X_n$ be a sequence of finite $T_0$-spaces and $R_0, R_1,\dots ,R_{n-1}$ be a sequence of maps such that $R_i \subseteq X_i \times X_{i+1}$ or
    $R_i \subseteq X_{i+1} \times X_{i}$. If $R_i \subseteq X_i \times X_{i+1}$ we say that $R_i$ goes right, and in the other case, we say that it goes left. We define the \textbf {multiple cylinder of relations} $B(R_0, R_1,\dots , R_{n-1}; X_0,X_1,\dots,X_n)$ as follows:
    
    \noindent The underlying set is the disjoint union $\bigsqcup\limits_{i=0}^{n} X_i$. We keep the given ordering in each copy $X_i$ and for $x$ and $y$ in different copies, we set $x \leq y$ in either of the following cases:
\begin{enumerate}
    \renewcommand{\labelenumi}{\normalfont\arabic{enumi}.} 
    \item  If $x \in X_{2i}$, $y \in X_{2i+1}$, $x\leq y$ if $\exists$ $x^{\prime}\in X_{2i}$ and $y^{\prime}\in X_{2i+1}$ such that $x\leq x^{\prime}$ in $X_{2i}$ and $y^{\prime}\leq y$ in $X_{2i+1}$ and $x^{\prime} R_{2i} y^{\prime}$ or $y^{\prime} R_{2i} x^{\prime}$.
    \item  If $x \in X_{2i}$, $y \in X_{2i-1}$, $x\leq y$ if $\exists$ $x^{\prime}\in X_{2i}$ and $y^{\prime}\in X_{2i-1}$ such that $x\leq x^{\prime}$ in $X_{2i}$ and $y^{\prime}\leq y$ in $X_{2i-1}$ and $y^{\prime} R_{2i-1} x^{\prime}$ or $x^{\prime} R_{2i-1} y^{\prime}$.
\end{enumerate}
\end{definition}
    \noindent Note that the multiple cylinder of relations coincides with the relation cylinder when $n=1$ and the unique map goes right.
    
    \noindent When induced by a sequence of order-preserving maps, the multiple cylinder of relations can be regarded as a third space that collapses to two distinct finite spaces as explained in Remark \ref{2}.
    
\begin{example}
Consider three finite $T_0$-spaces:
\[
X_0 = \{a_1 < a_2\}, \quad X_1 = \{b_1 < b_2 < b_3\}, \quad X_2 = \{c_1 < c_2\}.
\]

Let \( R_0 \subseteq X_0 \times X_1 \) and \( R_1 \subseteq X_2 \times X_1 \) be defined as:
\[
R_0 = \{(a_1, b_1), (a_2, b_2)\}, \quad R_1 = \{(c_1, b_2), (c_2, b_3)\}.
\]
Here, \( R_0 \) goes right and \( R_1 \) goes left.

We construct the multiple cylinder of relations:
\[
B(R_0, R_1; X_0, X_1, X_2) = X_0 \sqcup X_1 \sqcup X_2
\]
where the ordering is defined as follows:
\begin{itemize}
    \item The order within each \( X_i \) is preserved.
    \item For \( x \in X_0 \), \( y \in X_1 \), we set \( x \leq y \) if there exist \( x' \geq x \) in \( X_0 \) and \( y' \leq y \) in \( X_1 \) such that \( x' R_0 y' \).
    \item For \( x \in X_2 \), \( y \in X_1 \), we set \( x \leq y \) if there exist \( x' \geq x \) in \( X_2 \) and \( y' \leq y \) in \( X_1 \) such that \( x' R_1 y' \).
\end{itemize}

The resulting partial order includes the following cross-space relations:
\begin{itemize}
    \item \( a_1 \leq b_1, b_2, b_3 \)
    \item \( a_2 \leq b_2, b_3 \)
    \item \( c_1 \leq b_2, b_3 \)
    \item \( c_2 \leq b_3 \)
\end{itemize}

\vspace{1em}
\noindent The Hasse diagram of the space \( B(R_0, R_1; X_0, X_1, X_2) \) is given below:

\begin{center}
\begin{tikzpicture}[scale=1, every node/.style={circle, draw, minimum size=6mm}, node distance=15mm]

\node (a1) at (0,0) {$a_1$};
\node (a2) at (0,1.5) {$a_2$};

\node (b1) at (3,0) {$b_1$};
\node (b2) at (3,1.5) {$b_2$};
\node (b3) at (3,3) {$b_3$};

\node (c1) at (6,0) {$c_1$};
\node (c2) at (6,1.5) {$c_2$};

\draw[->] (a1) -- (a2);
\draw[->] (b1) -- (b2);
\draw[->] (b2) -- (b3);
\draw[->] (c1) -- (c2);

\draw[->, thick, blue] (a1) -- (b1);
\draw[->, thick, blue] (a1) to[bend left=10] (b2);
\draw[->, thick, blue] (a1) to[bend left=15] (b3);
\draw[->, thick, blue] (a2) -- (b2);
\draw[->, thick, blue] (a2) to[bend left=10] (b3);

\draw[->, thick, red] (c1) -- (b2);
\draw[->, thick, red] (c1) to[bend right=10] (b3);
\draw[->, thick, red] (c2) -- (b3);

\node at (0,-1) {\(X_0\)};
\node at (3,-1) {\(X_1\)};
\node at (6,-1) {\(X_2\)};

\end{tikzpicture}
\end{center}    
\end{example}
  
\begin{remark}\label{2}
    Let \( f_0, f_1, \dots, f_{n-1} \) be a sequence of order-preserving maps, such that $f_i$ goes right (or it goes left). These maps induce a corresponding sequence of relations \( R_0, R_1, \dots, R_{n-1} \) defined by \( x R_i f_i(x) \) for every \( x \in X_i \).
    
    In this context, the multiple cylinder of relations \( B(R_0, R_1, \dots, R_{n-1}; X_0, X_1, \dots, X_n) \) aligns with the multiple non-Hausdorff mapping cylinder \( B(f_0, f_1, \dots, f_{n-1}; X_0, X_1, \dots, X_n) \). \\
    Since a multiple non-Hausdorff mapping cylinder can be interpreted as an intermediate space that collapses to two distinct finite spaces (See Sec. 4.6, \cite{Barmak}), the multiple cylinder of relations \( B(R_0, R_1, \dots, R_{n-1}; X_0, X_1, \dots, X_n) \), when induced by a sequence of maps \( f_0, f_1, \dots, f_{n-1} \) as defined above, can similarly be viewed as an intermediate space that collapses to two distinct finite spaces.
\end{remark}
    \noindent The following propositions demonstrate how the collapse results in Propositions \ref{e} and \ref{f} can be extended naturally to compositions of relations.
\begin{proposition}\label{3}
    (\textit{Application of Proposition \ref{e} to Composite Relations}) Let $R_1\subseteq X\times Y$ and $R_2\subseteq Y\times Z$ be relations between finite $T_0$-spaces. If $\underline{(R_2\circ R_1)^{-1}(U_z)}$ is homotopically trivial for every $z\in Z$, then $B(R_2\circ R_1)\diagup\hspace{-0.17 cm}\searrow X$. In particular, $K(B(R_2\circ R_1))$ and $K(X)$ are (simple) homotopy equivalent. If $\underline{(R_2\circ R_1)^{-1}(U_z)}$ is collapsible for all $z\in Z$, then $B(R_2\circ R_1)\searrow X$ and $K(B(R_2\circ R_1))\searrow K(X)$.
\end{proposition}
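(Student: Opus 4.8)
The plan is to recognize that the composite $R_2 \circ R_1$ is itself a single relation between the finite $T_0$-spaces $X$ and $Z$, so that the statement reduces immediately to the single-relation case already established in Proposition \ref{e}. Concretely, I would set $R := R_2 \circ R_1 \subseteq X \times Z$, where $x\, R\, z$ means that there exists $y \in Y$ with $x R_1 y$ and $y R_2 z$, and observe that $B(R_2 \circ R_1)$ is, by definition, nothing but the relation cylinder $B(R)$ built on the disjoint union $X \sqcup Z$. Thus the entire content of the proposition is that Proposition \ref{e} applies verbatim to $R$, with $Z$ playing the role of the target space $Y$.

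First I would check that the hypothesis stated here is literally the hypothesis of Proposition \ref{e} for the relation $R$. The key identity is
\[
(R_2 \circ R_1)^{-1}(U_z) = R_1^{-1}\big(R_2^{-1}(U_z)\big),
\]
which follows by unwinding the definition of the preimage of a composite relation. Hence $R^{-1}(U_z) = (R_2 \circ R_1)^{-1}(U_z)$, and the assumption that $\underline{(R_2 \circ R_1)^{-1}(U_z)}$ is homotopically trivial for every $z \in Z$ is exactly the assumption that $\underline{R^{-1}(U_z)}$ is homotopically trivial for every $z$ in the target space. Since nothing in Proposition \ref{e} uses the internal structure of the intermediate space $Y$, no further verification is needed to invoke it.

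I would then apply Proposition \ref{e} directly: homotopical triviality of $\underline{R^{-1}(U_z)}$ for all $z \in Z$ yields $B(R) \diagup\hspace{-0.17 cm}\searrow X$, whence the order complexes $K(B(R))$ and $K(X)$ are (simple) homotopy equivalent; and the strengthened hypothesis that each $\underline{R^{-1}(U_z)}$ is collapsible gives the genuine collapses $B(R) \searrow X$ and $K(B(R)) \searrow K(X)$ via the ``moreover'' clause. Substituting $R = R_2 \circ R_1$ throughout recovers the claimed statement.

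The main (and essentially only) obstacle is conceptual rather than computational: one must confirm that the composition of two relations is again a relation of the same type, so that $B(R_2 \circ R_1)$ genuinely is a relation cylinder to which Proposition \ref{e} applies, and that the notation $(R_2 \circ R_1)^{-1}(U_z)$ is read consistently as $R_1^{-1}\big(R_2^{-1}(U_z)\big)$. Once this identification is in place, the result follows as an immediate corollary of Proposition \ref{e}, with no additional homotopy-theoretic work required.
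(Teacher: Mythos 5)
Your proposal is correct and matches the paper's approach: the paper's own proof opens with ``this follows by applying Proposition \ref{e} to the composite relation,'' which is exactly your reduction, viewing $R_2\circ R_1$ as a single relation in $X\times Z$ so that $B(R_2\circ R_1)$ is literally the relation cylinder $B(R)$ and the hypotheses coincide verbatim. The only difference is cosmetic: the paper goes on to re-derive the internal mechanics of Proposition \ref{e} (removing the points of $Z$ along a linear extension as $\gamma$-points, using $\hat{U}_{z_i}^{B(R_2\circ R_1)\setminus\{z_1,\dots,z_{i-1}\}} = \underline{(R_2\circ R_1)^{-1}(U_{z_i})}$), whereas you invoke Proposition \ref{e} as a black box, which is legitimate and arguably cleaner.
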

	
\begin{proof}
    This follows by applying Proposition \ref{e} to the composite relation. Let $z_1,z_2,\dots ,z_n$ be a linear extension of $Z$; that is, if $z_i \leq z_j$ then $i\leq j$. We iteratively remove $z_i$ from $B(R_2\circ R_1)$ by showing each $z_i$ is a $\gamma$-point of $B(R_2\circ R_1)\setminus \{z_1, z_2, \dots, z_{i-1}\}$:
    \[
    \hat{U}_{z_i}^{B(R_2\circ R_1)\setminus \{z_1,z_2,\dots,z_{i-1}\}} = \hat{U}_{z_i}^{B(R_2\circ R_1)}\setminus Z = \underline{(R_2\circ R_1)^{-1}(U_{z_i})},
    \]
    which is homotopically trivial by hypothesis. This shows that, $B(R_2\circ R_1)\diagup\hspace{-0.17 cm}\searrow X$. 
    
    \noindent For the collapsibility case, if $\underline{{(R_2\circ R_1)}^{-1}(U_z)}$ is collapsible, it implies that $\hat{U}_{z_i}^{B(R_2\circ R_1)\setminus \{z_1,z_2,\dots,z_{i-1}\}}$ is collapsible i.e., the link of $z_i$, $\hat{C}_{z_i}^{B(R_2\circ R_1)\setminus \{z_1,z_2,\dots,z_{i-1}\}}$ is also collapsible. Hence (using Remark 4.3.1 and Lemma 4.2.10 in \cite{Barmak}), $K(B(R_2\circ R_1))\searrow K(X)$.
\end{proof}

\begin{proposition}\label{4}
   
	(\textit{Application of Proposition \ref{f} to Composite Relations}) Let $R_1\subseteq X\times Y$ and $R_2\subseteq Y\times Z$ be relations between finite $T_0$-spaces. If $\overline{(R_2\circ R_1)(F_x)}$ is homotopically trivial for every $x\in X$, then $B(R_2\circ R_1)\diagup\hspace{-0.17 cm}\searrow Z$. If $\overline{(R_2\circ R_1)(F_x)}$ is collapsible for all $x\in X$, then $B(R_2\circ R_1)\searrow Z$ and $K(B(R_2\circ R_1))\searrow K(Z)$.
\end{proposition}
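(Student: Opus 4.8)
The plan is to recognize that $B(R_2\circ R_1)$ is precisely the relation cylinder $B(R)$ of the single composite relation $R:=R_2\circ R_1\subseteq X\times Z$, and then to run the dual of the argument used in Proposition \ref{3}. Whereas Proposition \ref{3} collapses the cylinder onto the domain by deleting points of the codomain (controlling the sets $\hat{U}_z$), here I would collapse onto the codomain $Z$ by deleting the points of $X$ one at a time, this time controlling the sets $\hat{F}_x$. This is exactly the passage from Proposition \ref{e} to Proposition \ref{f}, now applied to the composite relation rather than to a single one.

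Concretely, I would first fix a linear extension $x_1,x_2,\dots,x_m$ of $X$ written in \emph{reverse} order, so that $x_i\leq x_j$ in $X$ forces $j\leq i$; thus the points listed earlier are the ones higher up in $X$, and when $x_i$ is processed every element of $X$ strictly above it has already been removed. The key computation is to identify, for each $i$, the set $\hat{F}_{x_i}$ in the successively reduced space $B(R)\setminus\{x_1,\dots,x_{i-1}\}$. Using the order of $B(R)$ (for $x\in X$ and $z\in Z$ one has $x\leq z$ in $B(R)$ exactly when $z\in\overline{R(F_x)}$), together with the fact that the remaining $X$-points above $x_i$ are all among the deleted $x_1,\dots,x_{i-1}$, I would obtain
\[
\hat{F}_{x_i}^{\,B(R)\setminus\{x_1,\dots,x_{i-1}\}}=\hat{F}_{x_i}^{\,B(R)}\setminus X=\overline{(R_2\circ R_1)(F_{x_i})}.
\]
By hypothesis this set is homotopically trivial, so $x_i$ is a $\gamma$-point of $B(R)\setminus\{x_1,\dots,x_{i-1}\}$ and may be deleted; iterating over all of $X$ yields $B(R_2\circ R_1)\diagup\hspace{-0.17 cm}\searrow Z$, whence $K(B(R_2\circ R_1))$ and $K(Z)$ are (simple) homotopy equivalent.

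For the collapsibility statement I would upgrade the same argument: if each $\overline{(R_2\circ R_1)(F_{x_i})}$ is collapsible, then the corresponding $\hat{F}_{x_i}$ is collapsible, hence the link $\hat{C}_{x_i}$ is collapsible, so that the deletion of $x_i$ realizes a genuine collapse rather than a mere weak collapse. Invoking Remark 4.3.1 and Lemma 4.2.10 of \cite{Barmak}, exactly as in Proposition \ref{3}, this passes to the order complexes and gives $B(R_2\circ R_1)\searrow Z$ together with $K(B(R_2\circ R_1))\searrow K(Z)$.

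I expect the only genuine obstacle to be the order-theoretic bookkeeping behind the displayed identity: one must verify both that the cross relations of $B(R)$ restrict on the domain side to exactly $\overline{R(F_x)}$, and that the reverse linear extension guarantees that no $X$-element still lying above $x_i$ survives into $\hat{F}_{x_i}$ at the moment $x_i$ is removed. Since the order of $B(R)$ is fixed and subspaces merely restrict it, the set $\overline{(R_2\circ R_1)(F_{x_i})}$ is computed once from $X$ and $R$ and is unaffected by the prior deletions. Once this dual of the Proposition \ref{3} computation is checked, the homotopically-trivial and collapsible cases follow verbatim from Proposition \ref{f} and the cited results in \cite{Barmak}.
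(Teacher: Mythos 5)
Your proposal is correct and matches the paper's approach: the paper proves Proposition \ref{4} simply by declaring it dual to Proposition \ref{3} (applying Proposition \ref{f} to the composite relation), and your argument is exactly that dual spelled out — reverse linear extension of $X$, the computation $\hat{F}_{x_i}^{\,B(R)\setminus\{x_1,\dots,x_{i-1}\}}=\overline{(R_2\circ R_1)(F_{x_i})}$, removal of $\gamma$-points, and the same appeal to Remark 4.3.1 and Lemma 4.2.10 of \cite{Barmak} for the collapsible case. In fact your write-up supplies the bookkeeping the paper leaves implicit.
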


\begin{proof}
    This is dual to Proposition \ref{3}. Here, we apply Proposition \ref{f} to the composition. For each $x\in X$, if $\overline{(R_2\circ R_1)(F_x)}$ are homotopically trivial (resp. collapsible), the same argument as in Proposition \ref{3} shows a sequence of collapses to $Z$.
\end{proof}

\begin{remark}\label{5}
	(\textit{Combined Collapse Behavior}) Under the hypotheses of Propositions \ref{3} and \ref{4}, i.e., if both $\underline{(R_2\circ R_1)^{-1}(U_z)}$ and $\overline{(R_2\circ R_1)(F_x)}$ are homotopically trivial for all $z\in Z$, $x\in X$, then $X\diagup\hspace{-0.17 cm}\searrow Z$. Moreover, if these sets are collapsible, then $K(X)\diagup\hspace{-0.17 cm}\searrow^n K(Z)$, where $n = h(B(R_2\circ R_1))$.

\end{remark}

\begin{proposition}\label{6}
    (\textit{Intermediate Collapse via Composition}) Let $R_1\subseteq X\times Y$ and $R_2\subseteq Y\times Z$ be relations between finite $T_0$-spaces. If $\underline{R_1^{-1}(U_y)}$ and $\underline{(R_2\circ R_1)^{-1}(U_z)}$ are homotopically trivial for all $y \in Y$ and, for all $z \in Z$ respectively, then $B(R_2\circ R_1)\diagup\hspace{-0.17 cm}\searrow B(R_1)$. If both are collapsible, then $B(R_2\circ R_1)\diagup\hspace{-0.17 cm}\searrow^n B(R_1)$ with $n = h(X)$.
\end{proposition}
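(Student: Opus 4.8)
The plan is to observe that the two displayed hypotheses are precisely the inputs of two collapse results already in hand, each of which retracts one of the cylinders onto the common space $X$, and then to splice these reductions together. First I would apply Proposition \ref{3} to the composite relation: its hypothesis is exactly that $\underline{(R_2\circ R_1)^{-1}(U_z)}$ be homotopically trivial for every $z\in Z$, which is assumed here, so it delivers $B(R_2\circ R_1)\diagup\hspace{-0.17 cm}\searrow X$ (and the genuine collapse $B(R_2\circ R_1)\searrow X$ in the collapsible case). Independently I would apply Proposition \ref{e} to the single relation $R_1\subseteq X\times Y$: since $\underline{R_1^{-1}(U_y)}$ is homotopically trivial for every $y\in Y$, Proposition \ref{e} gives $B(R_1)\diagup\hspace{-0.17 cm}\searrow X$ (and $B(R_1)\searrow X$ when these sets are collapsible). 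The key structural point to flag is that the two cylinders have genuinely different underlying sets, $X\sqcup Z$ versus $X\sqcup Y$, so neither sits inside the other; the argument is therefore forced through the shared core $X$, and each of the two hypotheses feeds exactly one of the two reductions.

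With the span $B(R_2\circ R_1)\diagup\hspace{-0.17 cm}\searrow X$ and $B(R_1)\diagup\hspace{-0.17 cm}\searrow X$ in place, I would compose the first weak collapse with the inverse of the second. Because removing a $\gamma$-point induces a simple homotopy equivalence on the associated order complexes (Proposition \ref{e} together with the results of \cite{Barmak}), the moves generating $\diagup\hspace{-0.17 cm}\searrow$ are invertible up to simple homotopy type; reversing the reduction $B(R_1)\diagup\hspace{-0.17 cm}\searrow X$ into an expansion of $X$ and concatenating with $B(R_2\circ R_1)\diagup\hspace{-0.17 cm}\searrow X$ yields the asserted $B(R_2\circ R_1)\diagup\hspace{-0.17 cm}\searrow B(R_1)$, and in particular $K(B(R_2\circ R_1))$ and $K(B(R_1))$ are simple homotopy equivalent. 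This is exactly the ``through-the-core'' mechanism already used for Corollary \ref{g} and Remark \ref{5}, now with $X$ rather than a two-term cylinder as the common retract.

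For the collapsible strengthening I would run the same span but with the honest simplicial collapses $K(B(R_2\circ R_1))\searrow K(X)$ and $K(B(R_1))\searrow K(X)$ supplied by the collapsibility halves of Propositions \ref{3} and \ref{e}. The deformation relating $K(B(R_2\circ R_1))$ and $K(B(R_1))$ then takes place entirely over $K(X)$, whose dimension is $h(X)$, which is what produces the parameter in $B(R_2\circ R_1)\diagup\hspace{-0.17 cm}\searrow^{\,n} B(R_1)$ with $n=h(X)$. I expect the only real obstacle to be this last bookkeeping: the bare simple homotopy equivalence is immediate from the span, but certifying that the combined collapse–expansion is an $n$-deformation with the sharp value $n=h(X)$ requires tracking the collapse lengths relative to the shared retract $X$ rather than to either cylinder, precisely the type of height estimate carried out in Theorem \ref{52}. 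Everything else reduces to a clean invocation of Propositions \ref{3} and \ref{e}.
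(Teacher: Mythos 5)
Your proposal is correct and follows essentially the same route as the paper's own proof: apply Proposition \ref{e} to $R_1$ to get $B(R_1)\diagup\hspace{-0.17 cm}\searrow X$, apply Proposition \ref{3} to the composite to get $B(R_2\circ R_1)\diagup\hspace{-0.17 cm}\searrow X$, and splice the two reductions through the common core $X$, with the collapsible case handled by the same span using honest collapses and the bound $n=h(X)$ coming from the dimension of $K(X)$. If anything, you are more careful than the paper, which simply asserts the combination and the value $n=h(X)$ without the bookkeeping you flag.
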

\begin{proof}
\smallskip
	By Proposition \ref{e}, $B(R_1)\diagup\hspace{-0.17 cm}\searrow X$ under the hypothesis on $R_1$. Likewise, Proposition \ref{3} implies $B(R_2\circ R_1)\diagup\hspace{-0.17 cm}\searrow X$. Combining these gives $B(R_2\circ R_1)\diagup\hspace{-0.17 cm}\searrow B(R_1)$.
	
	\noindent Moreover, if $\underline{{R_1}^{-1}(U_y)}$ and $\underline{{(R_2\circ R_1)}^{-1}(U_z)}$ are collapsible for every $y\in Y$ and, for every $z\in Z$ respectively then by  Proposition \ref{e}, $B(R_1)\searrow X$ and by Proposition \ref{3}, $B(R_2\circ R_1)\searrow X$, which gives $B(R_2\circ R_1)\diagup\hspace{-0.17 cm}\searrow^n B(R_1)$ with $n = h(X)$.  
\end{proof}
	
\begin{proposition}\label{7}
(\textit{Intermediate Dual Collapse via Composition}) Let $R_1\subseteq X\times Y$ and $R_2\subseteq Y\times Z$ be relations between finite $T_0$-spaces. If $\overline{R_2(F_y)}$ and $\overline{(R_2\circ R_1)(F_x)}$ are homotopically trivial for all $y \in Y$, $x \in X$, then $B(R_2\circ R_1)\diagup\hspace{-0.17 cm}\searrow B(R_2)$. If both are collapsible, then $B(R_2\circ R_1)\diagup\hspace{-0.17 cm}\searrow^n B(R_2)$ where $n = h(Z)$.    
\end{proposition}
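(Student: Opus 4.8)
The plan is to treat this statement as the order-dual of Proposition \ref{6}, exchanging the roles of open hulls and closures (equivalently, of the down-set functor $U_{(-)}$ and the up-set functor $F_{(-)}$) and replacing the two ingredients used there by their duals: Proposition \ref{f} in place of Proposition \ref{e}, and Proposition \ref{4} in place of Proposition \ref{3}. Under this dualization, the common space $X$ of Proposition \ref{6} is replaced by the common space $Z$, which is why the expected step count becomes $n = h(Z)$ rather than $h(X)$.

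First I would apply Proposition \ref{f} to the relation $R_2 \subseteq Y \times Z$: since $\overline{R_2(F_y)}$ is homotopically trivial for every $y \in Y$ by hypothesis, Proposition \ref{f} yields $B(R_2)\diagup\hspace{-0.17 cm}\searrow Z$. Next I would apply Proposition \ref{4} to the composite relation $R_2\circ R_1 \subseteq X \times Z$: since $\overline{(R_2\circ R_1)(F_x)}$ is homotopically trivial for every $x \in X$, Proposition \ref{4} gives $B(R_2\circ R_1)\diagup\hspace{-0.17 cm}\searrow Z$. As both $B(R_2)$ and $B(R_2\circ R_1)$ contain $Z$ and reduce to it through expansions and collapses, they have the same simple homotopy type, and splicing the two chains (one of them reversed) produces $B(R_2\circ R_1)\diagup\hspace{-0.17 cm}\searrow B(R_2)$.

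For the refined collapsibility statement I would instead invoke the collapsibility clauses of the same two propositions. When $\overline{R_2(F_y)}$ is collapsible for every $y$, Proposition \ref{f} upgrades the reduction to a genuine collapse $B(R_2)\searrow Z$; when $\overline{(R_2\circ R_1)(F_x)}$ is collapsible for every $x$, Proposition \ref{4} gives $B(R_2\circ R_1)\searrow Z$. Concretely, each removed point $x \in X$ (resp. $y \in Y$) is a $\gamma$-point whose link $\hat{C}$ is collapsible; this is the dual of the $\gamma$-point removal in the proof of Proposition \ref{3}, now computed through the closure $\overline{(R_2\circ R_1)(F_x)} = \hat{F}_x^{B(R_2\circ R_1)}$ (the up-set) rather than the open hull $\underline{(R_2\circ R_1)^{-1}(U_z)}$ (the down-set), processed along a reverse linear extension of $X$. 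Combining $B(R_2\circ R_1)\searrow Z$ with the reverse of $B(R_2)\searrow Z$ then gives $B(R_2\circ R_1)\diagup\hspace{-0.17 cm}\searrow^n B(R_2)$.

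The main obstacle I anticipate is not the existence of the simple homotopy equivalence — that follows formally once both spaces collapse to the shared space $Z$ — but the verification that the collapse length can be taken to be exactly $n = h(Z)$. This requires the dual of the height bookkeeping implicit in Proposition \ref{6}: one must organize the $\gamma$-point removals by the height they occupy over $Z$ and check that the closures $\overline{R_2(F_y)}$ and $\overline{(R_2\circ R_1)(F_x)}$ affect cells only up to height $h(Z)$, so that the combined expansion–collapse sequence is bounded by $h(Z)$. Confirming that this dual count yields $h(Z)$ (and not, say, $h(B(R_2))$ or $h(B(R_2\circ R_1))$) is the step where the argument will need the most care.
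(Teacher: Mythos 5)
Your proposal matches the paper's proof essentially verbatim: the paper likewise applies Proposition \ref{f} to $R_2$ to get $B(R_2)\diagup\hspace{-0.17 cm}\searrow Z$, applies Proposition \ref{4} to $R_2\circ R_1$ to get $B(R_2\circ R_1)\diagup\hspace{-0.17 cm}\searrow Z$, splices these through the common space $Z$, and handles the collapsibility clause by the same dual argument (the paper simply says ``similar argument''). Your extra care about the bookkeeping for the bound $n=h(Z)$ goes beyond what the paper records, but it is consistent with how Proposition \ref{6} obtains $n=h(X)$ and raises no issue with the argument.
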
	
\begin{proof}
  From Proposition \ref{f}, we have $B(R_2)\diagup\hspace{-0.17 cm}\searrow Z$, and from Proposition \ref{4}, $B(R_2\circ R_1)\diagup\hspace{-0.17 cm}\searrow Z$. Hence, $B(R_2\circ R_1)\diagup\hspace{-0.17 cm}\searrow B(R_2)$. Similar argument can be applied to the collapsibility case as well.   
\end{proof}

\noindent Extending these ideas to multiple compositions, we summarize below the expected collapse behavior for the multiple cylinder of relations.

\begin{remark}\label{8}
	(\textit{Collapse for Multiple Composition}) Let $X_0, X_1, \dots, X_n$ be finite $T_0$-spaces and $R_i \subseteq X_i \times X_{i+1}$ a sequence of relations. If $\underline{(R_{n-1}\circ \cdots \circ R_0)^{-1}(U_{x_n})}$ is homotopically trivial for every $x_n \in X_n$, then $B(R_{n-1} \circ \cdots \circ R_0)\diagup\hspace{-0.17 cm}\searrow X_0$. If collapsible, then $B(R_{n-1} \circ \cdots \circ R_0)\searrow X_0$ and $K(B(R_{n-1} \circ \cdots \circ R_0)) \searrow K(X_0)$.
\end{remark}

\begin{remark}\label{9}
	(\textit{Dual Collapse for Multiple Composition}) In the setup of Remark \ref{8}, if $\overline{(R_{n-1}\circ \cdots \circ R_0)(F_{x_0})}$ is homotopically trivial for all $x_0 \in X_0$, then $B(R_{n-1} \circ \cdots \circ R_0)\diagup\hspace{-0.17 cm}\searrow X_n$. If collapsible, then $B(R_{n-1} \circ \cdots \circ R_0)\searrow X_n$ and $K(B(R_{n-1} \circ \cdots \circ R_0)) \searrow K(X_n)$.
\end{remark}

\begin{remark}\label{10}
	(\textit{Combined Collapse for Multiple Composition}) Under the assumptions of Remarks \ref{8} and \ref{9}, if both $\underline{(R_{n-1}\circ \cdots \circ R_0)^{-1}(U_{x_n})}$ and $\overline{(R_{n-1}\circ \cdots \circ R_0)(F_{x_0})}$ are homotopically trivial for all $x_0 \in X_0$, $x_n \in X_n$, then $X_0\diagup\hspace{-0.17 cm}\searrow X_n$. If collapsible, then $K(X_0)\diagup\hspace{-0.17 cm}\searrow^n K(X_n)$ where $n = h(B(R_{n-1} \circ \cdots \circ R_0))$.
\end{remark}

\subsection{Strong-good covers and its Associated Nerve Theorems}

    \noindent Here we define a new cover for finite simplicial complexes as well as for finite spaces, which gives a stronger version of the nerve theorem for both.
\begin{definition}\label{11}
    Let $K$ be a finite simplicial complex and let $\mathcal{U} =\{U_i \}_{i\in I}$ be a family of open subcomplexes which cover $K$ (i.e. $\bigcup_{i\in I} U_i = K$). If every intersection of elements of $\mathcal{U}$ is empty or collapsible then $\mathcal{U}$ is said to be a \textbf{strong-good cover} of $K$.
\end{definition}

\begin{proposition}\label{12}
 Let $K$ be a finite simplicial complex. If $\mathcal{U}$ is a strong-good cover of $K$, then it is a good cover.   
\end{proposition}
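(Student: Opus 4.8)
The plan is to invoke the fundamental relationship between collapsibility and contractibility for finite simplicial complexes. The statement asserts that a strong-good cover (every nonempty intersection collapsible, Definition~\ref{11}) is automatically a good cover (every nonempty intersection contractible, Definition~\ref{j}). Since both definitions quantify over exactly the same objects---the nonempty intersections $\bigcap_{i \in J} U_i$ of subfamilies of $\mathcal{U}$---the entire proposition reduces to a single pointwise implication: if a finite simplicial complex is collapsible, then it is contractible. I would state this as the crux of the argument and reduce to it immediately.

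First I would recall the definition of an elementary collapse: $L$ collapses to $L'$ (written $L \searrow L'$) if $L'$ is obtained from $L$ by removing a free face $\sigma$ together with the unique maximal simplex $\tau$ properly containing it. The key observation is that each elementary collapse is a strong deformation retraction of the underlying polyhedron $|L|$ onto $|L'|$; in particular it is a homotopy equivalence. A complex is collapsible precisely when it collapses to a point through a finite sequence of such elementary collapses. Composing these homotopy equivalences along the sequence shows that $|L|$ is homotopy equivalent to a point, i.e. $L$ is contractible.

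The main step, then, is to assemble these observations: let $\sigma = \bigcap_{i \in J} U_i$ be an arbitrary nonempty intersection of cover elements. By the strong-good hypothesis $\sigma$ is collapsible, so by the implication above it is contractible. Since $J$ was arbitrary, every nonempty intersection is either empty or contractible, which is exactly the defining condition for $\mathcal{U}$ to be a good cover. I would close by noting that the empty-intersection case is handled identically under both definitions, so nothing further is needed.

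The argument involves no genuine obstacle---it is essentially a definitional containment leveraging the standard fact "collapsible $\Rightarrow$ contractible." The only point requiring minor care is ensuring that the two notions of cover apply to the same family of intersections and that the regularity/finiteness hypotheses match; since both Definition~\ref{j} and Definition~\ref{11} are stated for finite simplicial complexes (or regular CW-complexes) with the identical universal quantifier over intersections, these align without friction. I would therefore expect the proof to be short, with its entire content being the citation or brief justification that collapsibility implies contractibility, referring to~\cite{Barmak} for the precise treatment of elementary collapses in the finite setting.
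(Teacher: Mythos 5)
Your proposal is correct and follows essentially the same route as the paper: both reduce the statement to the single pointwise fact that a collapsible finite simplicial complex is contractible, then observe that every nonempty intersection in a strong-good cover therefore satisfies the good-cover condition. The only cosmetic difference is in how that fact is justified---you argue that each elementary collapse is a deformation retraction of the polyhedron, while the paper routes through ``collapsible $\Rightarrow$ simple homotopy equivalent to a point $\Rightarrow$ contractible'' with a citation to \cite{Barmak}; both are standard and equivalent.
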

\begin{proof}
 By definition, a strong-good cover $\mathcal{U} = \{U_i\}_{i \in I}$ of $K$ satisfies that each intersection $W_J = \bigcap_{i \in J} U_i$ is either empty or collapsible. Since, collapsible implies simple homotopy equivalent to a point, and for simplicial complexes it is a homotopy equivalence (see p.52, \cite{Barmak}). Therefore, in the simplicial setting, collapsibility implies contractibility.  Thus, each non-empty intersection is contractible, fulfilling the requirement for $\mathcal{U}$ to be a good cover.    
\end{proof}
\noindent However, if $\mathcal{U}$ is a good cover of $K$, then it is not necessarily a strong-good cover of $K$ because not all contractible complexes are collapsible (Example \ref{12a}).

\begin{example}\label{12a}
    The Dunce hat and the Bing's House with two rooms are two contractible complexes without having any free faces. Therefore they are not collapsible.
\end{example}
    \noindent By Proposition \ref{12}, for a simplicial complex, strong-good covers are good covers therefore the nerve theorem for finite simplicial complexes Theorem \ref{k} can be modified for strong-good covers as well:
\begin{remark}\label{13}
    Let $K$ be a finite simplicial complex (or more generally, a regular CW-complex) and let $\mathcal{U} =\{L_i \}_{i\in I}$ be a family of subcomplexes which cover $K$ (i.e. $\bigcup_{i\in I} L_i = K$).If every intersection of elements of $\mathcal{U}$ is empty or collapsible then $K$ and $\mathcal{N}(\mathcal{U})$ have the same (simple) homotopy type.
 \end{remark}
    \noindent Analogously, we define a new open cover for finite spaces that gives the nerve theorem for strong-good covers of finite spaces.
\begin{definition}\label{14}
    Let $X$ be a finite space and let $\mathcal{U} =\{U_i \}_{i\in I}$ be a family of open subspaces which cover $X$ (i.e. $\bigcup_{i\in I} U_i = X$). If every intersection of elements of $\mathcal{U}$ is empty or collapsible then $\mathcal{U}$ is said to be a \textbf{strong-good cover} of $X$.
\end{definition}

\begin{proposition}\label{15}
Let $X$ be a finite $T_0$-space. If $\mathcal{U}$ is a strong-good cover of $X$, then it is a good cover of $X$.
\end{proposition}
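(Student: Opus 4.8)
The plan is to reduce the proposition to a single implication for finite $T_0$-spaces: collapsible $\Rightarrow$ homotopically trivial. The statement then mirrors its simplicial counterpart Proposition \ref{12}, the only change being that the target condition coming from Definition \ref{m} is homotopical triviality rather than contractibility. First I would note that any finite intersection $W_J = \bigcap_{i\in J} U_i$ of members of $\mathcal{U}$ is an open subspace of $X$, hence again a finite $T_0$-space on which both notions make sense. By the strong-good hypothesis (Definition \ref{14}) every non-empty $W_J$ is collapsible, so once the implication above is established, each non-empty intersection is homotopically trivial and $\mathcal{U}$ is a good cover in the sense of Definition \ref{m}.

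The core step is to show that a collapsible finite space $W$ is homotopically trivial. By definition $W \searrow \ast$, i.e. $W$ reduces to a point through a finite sequence of elementary collapses, each removing a weak point. I would invoke the fact from \cite{Barmak} that the removal of a weak point is a weak homotopy equivalence; composing these along the collapsing sequence shows that the inclusion of the point into $W$ is a weak homotopy equivalence, so all homotopy groups of $W$ vanish and $W$ is homotopically trivial. Equivalently, and using exactly the tools already cited in the proof of Proposition \ref{3} (Remark 4.3.1 and Lemma 4.2.10 of \cite{Barmak}), the collapse $W \searrow \ast$ induces a simplicial collapse $K(W) \searrow \ast$, so $K(W)$ is contractible and $W$, being weakly equivalent to $K(W)$ via the McCord map, is homotopically trivial.

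I do not expect a genuine obstacle here; the only point requiring care is the bookkeeping of which invariant a finite-space collapse preserves. In the simplicial setting of Proposition \ref{12} one could upgrade collapsibility all the way to contractibility, but a collapse of finite spaces is in general only a weak homotopy equivalence, so $W$ need not be contractible. This is precisely why the correct finite-space target is homotopical triviality (Definition \ref{m}), and it is what makes this proposition the faithful analogue of Proposition \ref{12}.
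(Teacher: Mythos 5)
Your proposal is correct and follows essentially the same route as the paper: both reduce the statement to the implication ``collapsible $\Rightarrow$ homotopically trivial'' for finite spaces, and both establish it by observing that an elementary collapse removes a weak point, which preserves the weak homotopy type (Barmak's Proposition 4.2.4), so a collapsible intersection is weak homotopy equivalent to a point. Your added care about why the target invariant must be homotopical triviality rather than contractibility is a sound clarification, but not a different argument.
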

\begin{proof}
 By definition, a strong-good cover of $X$ satisfies that every non-empty intersection \( W_J = \bigcap_{i \in J} U_i \) (for \( J \subseteq I \)) is collapsible.
 \noindent In finite spaces, collapsibility implies homotopically triviality, as collapsible spaces can be reduced to a single point by removing \emph{weak points}, whose removal preserves the weak homotopy type of the space (see Proposition 4.2.4 in \cite{Barmak}). Thus, a finite space that is collapsible is weak homotopy equivalent to a point, i.e., homotopically trivial fulfilling the requirement for $\mathcal{U}$ to be a good cover of $X$.

    \noindent However, if $\mathcal{U}$ is a good cover of $X$, then it is not necessarily a strong-good cover of $X$ because homotopically trivial finite spaces do not guarantee that they are collapsible (Example \ref{15a}).    
\end{proof}
    
\begin{example}\label{15a}
    The finite space in Fig 7.3 of \cite{Barmak} is homotopically trivial but not collapsible as it does not have a weak point.
\end{example}
\begin{remark}\label{16}
    If $\mathcal{U}$ is a good cover of $X$, this does not necessarily imply that $\mathcal{U}$ is a strong-good cover of $X$, even if the Whitehead group of every intersection of elements in $\mathcal{U}$ is trivial. The crucial difference is that weak homotopy equivalence does not imply simple homotopy equivalence in finite spaces, nor does it imply collapsibility. A trivial Whitehead group ensures that a space is simple homotopy equivalent to a point, but it does not guarantee collapsibility. Therefore, the conditions for a strong-good cover, which require that every non-empty intersection of elements in $\mathcal{U}$ is collapsible are not satisfied merely by $\mathcal{U}$ being a good cover and having trivial Whitehead groups for all intersections.
\end{remark}
    \noindent By Proposition \ref{15}, strong-good covers for a finite space are good covers therefore the nerve theorem for finite spaces  Theorem \ref{n} can be extended for strong-good covers as well:

\begin{remark}\label{17}
     Let $X$ be a finite space and let $\mathcal{U} =\{U_i \}_{i\in I}$ be a family of open subspaces which cover $X$ (i.e. $\bigcup_{i\in I} U_i = X$).If every intersection of elements of $\mathcal{U}$ is empty or collapsible then $X$ and $\chi(\mathcal{U})$ have the same (simple) homotopy type, where $\chi(\mathcal{U})= \chi(\mathcal{N}(\mathcal{U}))$ is the face poset of the nerve $\mathcal{N}(\mathcal{U})$.
\end{remark}
    \noindent Since two finite simplicial complexes $K$ and $L$ are simple homotopy equivalent if and only if their associated face posets $\chi(K)$ and $\chi(L)$ are simple homotopy equivalent and two finite spaces $X$ and $Y$ are simple homotopy equivalent if and only if their associated order complexes $K(X)$ and $K(Y)$ are simple homotopy equivalent (Theorem 4.2.11 of \cite{Barmak}), therefore we get that the nerve theorem for strong-good covers of complexes and the nerve theorem for strong-good covers of finite space are equivalent to each other.
\begin{remark}\label{19}
    Let $K$ be a finite simplicial complex (or more generally, a regular CW-complex) and let $\mathcal{U} =\{L_i \}_{i\in I}$ be a family of subcomplexes which cover $K$ (i.e. $\bigcup_{i\in I} L_i = K$). If every intersection of elements of $\mathcal{U}$ is empty or homotopically trivial then $\mathcal{U}$ is the same as a good cover of $K$ as for simplicial complexes (or more generally for a regular CW-complex), two simplicial complexes are homotopy equivalent if and only if they are weak homotopy equivalent.
\end{remark}

    \noindent We now demonstrate an application of strong-good covers in the context of persistent homology, showing that under such covers, persistence modules vanish in positive degrees. The results below further distinguish strong-good covers from classical good covers.
    
\begin{lemma}[Union of Collapsible Complexes]
\label{lem:union-collapsible}
Let \( A \) and \( B \) be finite simplicial complexes such that \( A \), \( B \), and \( A \cap B \) are collapsible. Then \( A \cup B \) is collapsible.
\end{lemma}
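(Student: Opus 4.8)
The plan is to deduce the collapsibility of $A\cup B$ from a single \emph{relative} collapse together with a gluing principle for simplicial collapses. Concretely, I would first isolate the following gluing lemma: if $L\searrow L_0$ and $K\cap L\subseteq L_0$, then $K\cup L\searrow K\cup L_0$. Granting this, take $K=B$, $L=A$, and $L_0=A\cap B$; if one can show $A\searrow A\cap B$, the gluing lemma yields $A\cup B=B\cup A\searrow B\cup(A\cap B)=B$, and since $B$ is collapsible we conclude $A\cup B\searrow B\searrow\{\ast\}$, which is exactly the claim. (By symmetry it would equally suffice to establish $B\searrow A\cap B$ and then invoke the collapsibility of $A$.) Thus the statement reduces to two ingredients: (i) the gluing lemma, and (ii) the relative collapse $A\searrow A\cap B$.

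The gluing lemma is the routine part. A collapse $L\searrow L_0$ is a finite sequence of elementary collapses, each removing a free pair $(\sigma_i,\tau_i)$ with $\sigma_i,\tau_i\in L\setminus L_0$. Since $K\cap L\subseteq L_0$, none of these simplices lies in $K$. The key observation is that a simplex $\tau$ of $L$ with $\tau\notin K$ has \emph{all} of its cofaces inside $L$: any coface $\rho\supseteq\tau$ with $\rho\in K$ would force $\tau\in K$, because $K$ is a subcomplex. Consequently, at the corresponding stage of $K\cup L$ the cofaces of $\tau_i$ coincide with its cofaces in $L$, namely the single simplex $\sigma_i$; hence $(\sigma_i,\tau_i)$ remains a free pair in $K\cup L$. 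Performing the same sequence of removals therefore collapses $K\cup L$ onto $K\cup L_0$, establishing (i).

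The genuine difficulty is ingredient (ii): that the collapsible complex $A$ collapses onto its collapsible subcomplex $C:=A\cap B$. The naive attempt is to lift a collapsing scheme of $C$ to $A$, but this fails at once, since a face that is free in $C$ may acquire additional cofaces in $A\setminus C$ and so cease to be free in $A$. The alternative I would pursue is to collapse $A$ while keeping $C$ fixed, that is, to exhibit at each stage a free pair of the current complex lying entirely in $A\setminus C$, continuing until only $C$ remains; this would be organised as an induction on the number of simplices of $A\setminus C$. I expect the main obstacle to be exactly the existence of such ``free faces away from $C$'': collapsibility of $A$ alone guarantees a free face somewhere, but possibly inside $C$, whereas collapsibility of $C$ only controls the combinatorics within $C$. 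Reconciling the two, so that the collapse of $A$ can be steered to terminate precisely at $C$, is the technical heart of the argument and is where both collapsibility hypotheses are genuinely consumed. Once $A\searrow C$ is secured, the gluing lemma of step (i) and the collapsibility of $B$ complete the proof as described above.
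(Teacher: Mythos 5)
Your step (i), the gluing lemma, is correct, and your reduction is exactly the one the paper itself uses: the paper's proof consists of the relative collapse $A \searrow A\cap B$ (its Step 1), followed by the gluing principle (its Steps 2--3, via Barmak's Proposition 4.1.3) and finally $A\cap B \searrow \{v\}$. But your proposal stops short of proving ingredient (ii), the relative collapse $A \searrow A\cap B$; you yourself call it ``the technical heart'' and only describe a hoped-for induction exhibiting, at each stage, a free pair lying in $A \setminus (A\cap B)$. That is a genuine gap, not a routine verification, and it cannot be closed from the stated hypotheses: collapsibility of $A$ and of $C = A\cap B$ does \emph{not} imply $A \searrow C$. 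The standard obstruction is knot-theoretic. Benedetti and Lutz (2013) constructed collapsible triangulated $3$-balls containing a knotted spanning arc; the arc, like any arc, is collapsible, yet such a ball cannot collapse onto it, because a collapse $X \searrow Y$ of a compact PL manifold $X$ exhibits $X$ as a regular neighbourhood of $Y$, and a regular neighbourhood of a properly embedded arc in a $3$-ball is an unknotted ball--arc pair. So there are configurations satisfying your hypotheses for which the induction you propose must get stuck: at some stage there is no free face of the current complex outside $C$ at all. Any correct argument for (ii) would have to use information about how $A\cap B$ sits inside $A$, not merely that both complexes are collapsible.

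This also sharpens the comparison with the paper: where you honestly flag (ii) as unproven, the paper asserts it, by ``choosing'' the terminal vertex $v$ of a collapse $A \searrow \{v\}$ to lie in $A\cap B$ and then citing Proposition 4.1.3 with $K = A\cap B$, $L = A$. One cannot in general prescribe where a collapse terminates, and, read as the gluing principle (which is how the same proposition is used in the paper's Step 2), that citation is circular, since it takes $A \searrow A\cap B$ as its hypothesis. So your instinct was right that the entire difficulty of the lemma is concentrated in step (ii); your proposal is incomplete precisely there, and, by the examples above, that step cannot be filled at the stated level of generality by any amount of careful bookkeeping of free faces.
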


\begin{proof}
We collapse \(A\cup B\) to a point in three stages, each invoking Proposition 4.1.3 from \cite{Barmak}

\medskip
\noindent\textbf{Step 1. Collapse \(A\) onto \(A\cap B\).}
Since \(A\) is collapsible, there is a sequence of elementary collapses
\[
  A \;\searrow\; \{v\}
\]
for some vertex \(v\in A\).  Choose \(v\in A\cap B\).  By Proposition~4.1.3 applied with
\[
  K = A\cap B,\quad L = A
\]
inside \(A\), we obtain
\[
  A \;\searrow\; A\cap B.
\]

\medskip
\noindent\textbf{Step 2. Collapse \(A\cup B\) onto \(B\).}

Apply Proposition 4.1.3 from \cite{Barmak} in \(A\cup B\) with
\[
  K = B,\quad L = A.
\]
Since \(A\searrow A\cap B\), we deduce
\[
  A\cup B \;\searrow\; B.
\]

\medskip
\noindent\textbf{Step 3. Collapse \(B\) onto \(A\cap B\).}

By symmetry, using Proposition 4.1.3 from \cite{Barmak} with
\[
  K = A\cap B,\quad L = B,
\]
we get
\[
  B \;\searrow\; A\cap B.
\]

\medskip
\noindent\textbf{Step 4. Collapse \(A\cap B\) to a point.}

Finally, since \(A\cap B\) is collapsible by hypothesis, there is a collapse
\[
  A\cap B \;\searrow\; \{v\}.
\]

\medskip
Concatenating these,
\[
  A\cup B
  \;\searrow\; B
  \;\searrow\; A\cap B
  \;\searrow\; \{v\},
\]
shows \(A\cup B\) is collapsible.
\end{proof}

\begin{lemma}[Finite Union of Collapsible Complexes]
\label{lem:finite-union-collapsible}
Let \( \mathcal{A} = \{A_i\}_{i=1}^n \) be a finite collection of simplicial complexes such that:
\begin{enumerate}
    \item Each \( A_i \) is collapsible,
    \item Every non-empty finite intersection \( A_{i_1} \cap \cdots \cap A_{i_k} \) is collapsible for \(2\leq k\leq n\).
\end{enumerate}
Then the union \( \bigcup_{i=1}^n A_i \) is collapsible.
\end{lemma}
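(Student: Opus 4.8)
The plan is to induct on the number $n$ of complexes, using Lemma~\ref{lem:union-collapsible} as the two-complex merging step. The base case $n=1$ is immediate, and $n=2$ is exactly Lemma~\ref{lem:union-collapsible}. For the inductive step I would assume the statement for every family of at most $n-1$ complexes satisfying the hypotheses, write $U = \bigcup_{i=1}^{n-1} A_i$ so that $\bigcup_{i=1}^{n} A_i = U \cup A_n$, and apply Lemma~\ref{lem:union-collapsible} to the pair $(U, A_n)$. This requires three complexes to be collapsible: $U$, $A_n$, and $U \cap A_n$.

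Two of these come almost for free. The complex $A_n$ is collapsible by hypothesis~(1). The complex $U$ is the union of the subfamily $\{A_1, \dots, A_{n-1}\}$, whose members and whose mutual intersections are all intersections of the original family and hence collapsible by~(1) and~(2); so $U$ is collapsible by the inductive hypothesis. The substantive term is the third.

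For $U \cap A_n$ I would use the distributive law
\[
  U \cap A_n = \Bigl(\bigcup_{i=1}^{n-1} A_i\Bigr) \cap A_n = \bigcup_{i=1}^{n-1} (A_i \cap A_n),
\]
and set $B_i := A_i \cap A_n$. Each $B_i$ is a two-fold intersection of the original family, hence collapsible by~(2); and any intersection $B_{i_1} \cap \cdots \cap B_{i_k} = A_{i_1} \cap \cdots \cap A_{i_k} \cap A_n$ is a $(k+1)$-fold intersection of the original family, again collapsible by~(2). Thus $\{B_i\}_{i=1}^{n-1}$ itself satisfies the hypotheses of the lemma with $n-1$ complexes, and the inductive hypothesis gives that $U \cap A_n = \bigcup_{i=1}^{n-1} B_i$ is collapsible. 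With all three pieces in hand, Lemma~\ref{lem:union-collapsible} yields that $U \cup A_n = \bigcup_{i=1}^n A_i$ is collapsible, closing the induction.

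The main obstacle I anticipate is not the combinatorial bookkeeping but the non-emptiness hidden inside the word ``collapsible''. Collapsibility presupposes a non-empty complex, so for the applications of the inductive hypothesis and of Lemma~\ref{lem:union-collapsible} to be legitimate I must know that $U \cap A_n$, and the relevant $B_i$, are non-empty; if some $A_i \cap A_n = \emptyset$ it merely drops out of the union, but if $U \cap A_n = \emptyset$ altogether then $U \cup A_n$ is disconnected and the conclusion fails, the simplest counterexample being two disjoint points. The honest reading of hypothesis~(2) that makes the argument go through is therefore that every intersection of a subfamily is non-empty and collapsible (equivalently, that the nerve of $\{A_i\}$ is a full simplex), which is exactly what guarantees both that each restricted subfamily again meets the hypotheses and that the successive pairwise merges stay connected.
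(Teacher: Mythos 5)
Your proof is correct and follows essentially the same route as the paper's: induction on $n$, writing $\bigcup_{i=1}^n A_i = U \cup A_n$ with $U = \bigcup_{i=1}^{n-1} A_i$, using the distributive law $U \cap A_n = \bigcup_{i=1}^{n-1}(A_i \cap A_n)$ to verify that this smaller family again satisfies the hypotheses, and finishing with Lemma~\ref{lem:union-collapsible}. Your closing observation about non-emptiness is in fact an improvement on the paper's treatment: as literally stated, with hypothesis (2) constraining only the \emph{non-empty} intersections, the lemma is false (two disjoint points satisfy the hypotheses vacuously, yet their union is not collapsible), and the paper's proof silently applies Lemma~\ref{lem:union-collapsible} to $U \cap A_n$ without checking that it is non-empty; your stronger reading of (2) --- every subfamily intersection non-empty and collapsible, i.e.\ the nerve is a full simplex --- is precisely the hypothesis under which both your argument and the paper's go through.
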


\begin{proof}
We proceed by induction on \( n \).

\textbf{Base case (\( n = 1 \)):}  
The union is \( A_1 \), which is collapsible by assumption.

\textbf{Inductive step:}  
Assume the statement holds for all unions of the \( n-1 \) complexes. Let \( A = \bigcup_{i=1}^{n-1} A_i \). According to the induction hypothesis, \( A \) is collapsible. Now consider \( A_n \). We must show that \( A \cup A_n \) is collapsible.

First, observe the intersection \( A \cap A_n = \bigcup_{i=1}^{n-1} (A_i \cap A_n) \). To apply the induction hypothesis to this union, note the following:
(1) \emph{Collapsibility of individual terms:} Each \( A_i \cap A_n \) is collapsible under the second condition of the lemma (as it is a finite intersection of two complexes).

(2) \emph{Collapsibility of intersections:} For any subcollection \( \{A_{i_1} \cap A_n, \dots, A_{i_k} \cap A_n\} \), their intersection is:
\[
\bigcap_{j=1}^k (A_{i_j} \cap A_n) = \left(\bigcap_{j=1}^k A_{i_j}\right) \cap A_n,
\]
which is collapsible according to the second condition of the lemma (since it is a finite intersection of the original family).

By the induction hypothesis, \( A \cap A_n = \bigcup_{i=1}^{n-1} (A_i \cap A_n) \) is collapsible.

Now, since \( A \) is collapsible (by the induction hypothesis), \( A_n \) is collapsible (by assumption), and \( A \cap A_n \) is collapsible (as shown above), we apply Lemma~\ref{lem:union-collapsible} (the two-complex case) to conclude that \( A \cup A_n = \bigcup_{i=1}^n A_i \) is collapsible. 

By induction, the result holds for all \( n \).

\end{proof}

\begin{example}[Two Collapsible Triangles with Collapsible Overlap]
Let \( A \) be the triangle on vertices \( [v_0, v_1, v_2] \) and \( B \) be the triangle on vertices \( [v_0, v_2, v_3] \). The intersection \( A \cap B = [v_0, v_2] \) is an edge. Each triangle is collapsible, and so is their intersection. First, collapse \( A \) to the edge \( [v_0, v_2] \), then collapse \( B \) to the same edge. Finally, collapse the edge to the vertex \( v_0 \). Hence, \( A \cup B \) is collapsible.
\end{example}

\begin{lemma}[Collapsible \(\Rightarrow\) Contractible \(\Rightarrow\) Acyclic]
\label{lem:collapsible-acyclic}
Every collapsible simplicial complex is contractible, and hence has trivial reduced homology in all dimensions; that is,
\[
\widetilde{H}_j(K) = 0 \quad \text{for all } j \geq 0.
\]
\end{lemma}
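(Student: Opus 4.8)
The plan is to establish the two implications in the title one at a time: first that collapsibility yields contractibility, and then that contractibility yields the vanishing of all reduced homology groups.

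First I would recall that, by definition, $K$ being collapsible means there is a finite sequence of elementary collapses reducing $K$ to a single vertex, say $K = K_0 \searrow K_1 \searrow \cdots \searrow K_m = \{v\}$. The key structural fact is that a single elementary collapse --- the removal of a free face $\sigma$ together with the unique maximal simplex $\tau$ properly containing it --- induces a strong deformation retraction of geometric realizations $|K_i| \searrow |K_{i+1}|$; this is standard and is exactly the simplicial content recorded in \cite{Barmak} (cf.\ p.~52, where collapsibility of a simplicial complex is shown to yield a genuine homotopy equivalence, not merely a simple homotopy equivalence). Since a deformation retraction is in particular a homotopy equivalence and homotopy equivalences compose, concatenating the $m$ collapses gives a homotopy equivalence $|K| \simeq |\{v\}|$, a single point. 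Thus $K$ is contractible.

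For the second implication, I would invoke the homotopy invariance of singular (equivalently, simplicial) homology: homotopy equivalent spaces have isomorphic reduced homology groups in every degree. Applying this to the homotopy equivalence $|K| \simeq \{v\}$ obtained in the first step, and using that a point satisfies $\widetilde{H}_j(\{v\}) = 0$ for all $j \geq 0$, we conclude $\widetilde{H}_j(K) \cong \widetilde{H}_j(\{v\}) = 0$ for all $j \geq 0$, as claimed.

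The only genuinely substantive ingredient is the claim that each elementary collapse is a deformation retraction; everything else is formal, namely the composition of homotopy equivalences and the homotopy invariance of homology. I do not expect this to be a real obstacle, since it is classical: geometrically, collapsing the free pair $(\sigma,\tau)$ retracts $|\tau|$ onto the union of its proper faces other than $\sigma$ by ``pushing in'' from the free face, and such local retractions patch together to a global strong deformation retraction. As this fact is already available in \cite{Barmak}, the proof reduces to assembling these two standard ingredients in the stated order.
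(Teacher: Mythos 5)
Your proposal is correct and follows essentially the same route as the paper: both pass from the sequence of elementary collapses to a homotopy equivalence $|K| \simeq \{v\}$ and then conclude by homotopy invariance of reduced homology. The only cosmetic difference is that you justify the first step by noting each elementary collapse is a strong deformation retraction of realizations, whereas the paper phrases it as ``collapses give a simple homotopy equivalence, which implies homotopy equivalence''---these are the same argument at different levels of unpacking.
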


\begin{proof}
A collapsible complex admits a sequence of elementary collapses reducing it to a single vertex. These collapses define a simple homotopy equivalence between the complex and a point. Since simple homotopy equivalence implies homotopy equivalence, the complex is homotopy equivalent to a point, i.e., contractible. By standard results in algebraic topology, a contractible space has trivial reduced homology in all dimensions.
\end{proof}

\begin{theorem}[Persistent Homology Truncation via Strong Good Cover]\label{persistent-truncation}
Let \( K \) be a finite simplicial complex, and let \( \mathcal{U} = \{U_i\}_{i \in I} \) be a \emph{strong good cover} of \( K \); that is, each \( U_i \) and every non-empty finite intersection
\[
U_\sigma := \bigcap_{i \in \sigma} U_i
\]
is \emph{collapsible}. Then:

\begin{enumerate}
    \item For all \( k \geq 0 \), the inclusion-induced map yields an isomorphism
    \[
    H_k(\mathcal{N}(\mathcal{U})) \cong H_k(K),
    \]
    where \( \mathcal{N}(\mathcal{U}) \) denotes the nerve of the cover \( \mathcal{U} \).

    \item For every subset \( J \subseteq I \), define \( K_J := \bigcup_{i \in J} U_i \). Then
    \[
    H_k(K_J) = 0 \quad \text{for all } k > 0,
    \]
    i.e., the persistence module \( \{ H_k(K_J) \}_{J \subseteq I} \) vanishes in positive degrees.
\end{enumerate}
\end{theorem}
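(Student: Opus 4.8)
The plan is to treat the two statements separately, since each reduces cleanly to results already established in the excerpt. For part (1), the key observation is Proposition~\ref{12}: a strong-good cover is in particular a good cover, because collapsibility implies contractibility for simplicial complexes. First I would invoke this to place ourselves under the hypotheses of the Nerve Theorem for good covers (Theorem~\ref{k}, equivalently Remark~\ref{13}), which yields that $K$ and $\mathcal{N}(\mathcal{U})$ share the same (simple) homotopy type. Since a homotopy equivalence induces isomorphisms on all (simplicial, equivalently singular) homology groups, the equivalence furnished by the Nerve Theorem immediately gives $H_k(\mathcal{N}(\mathcal{U})) \cong H_k(K)$ for every $k \geq 0$. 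The only care needed is to observe that this isomorphism is realized by the natural map of the Nerve Theorem rather than a literal inclusion, so the phrase ``inclusion-induced'' should be read as the canonical comparison map.

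For part (2), I would argue that each $K_J$ is itself collapsible and therefore acyclic. Fix $J \subseteq I$ and consider the subfamily $\{U_i\}_{i \in J}$, which covers $K_J = \bigcup_{i \in J} U_i$ by construction. This subfamily inherits the strong-good property: every non-empty finite intersection $\bigcap_{i \in \sigma} U_i$ with $\sigma \subseteq J$ is also an intersection of members of the original family $\mathcal{U}$, hence collapsible by hypothesis, and each $U_i$ is collapsible. Thus the hypotheses of Lemma~\ref{lem:finite-union-collapsible} are met, and we conclude that $K_J$ is collapsible. Applying Lemma~\ref{lem:collapsible-acyclic} then gives $\widetilde{H}_k(K_J) = 0$ for all $k \geq 0$, and in particular $H_k(K_J) = 0$ for all $k > 0$. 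Since this holds for every $J$, all structure maps of the persistence module $\{H_k(K_J)\}_{J \subseteq I}$ (induced by the inclusions $K_J \hookrightarrow K_{J'}$ for $J \subseteq J'$) are maps between zero groups in positive degrees, so the module vanishes there.

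The main obstacle, and really the only nontrivial point, is the verification that the restricted subfamily remains strong-good so that Lemma~\ref{lem:finite-union-collapsible} applies; this hinges on the closure property that an intersection indexed by $\sigma \subseteq J$ is literally one of the intersections already controlled by the original cover, so no new collapsibility hypotheses are needed. A secondary technical caveat is finiteness: Lemma~\ref{lem:finite-union-collapsible} is stated for a finite collection, so one should first reduce to a finite index set, which is automatic here since $K$ is a finite complex and therefore admits only finitely many distinct subcomplexes among the $U_i$. With these observations in place, both parts follow by assembling the cited results without further computation.
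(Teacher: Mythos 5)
Your proof is correct, and part (2) coincides with the paper's own argument: both of you verify that the subfamily indexed by $J$ inherits collapsibility of all its intersections, invoke Lemma~\ref{lem:finite-union-collapsible} to conclude that $K_J$ is collapsible, and then get acyclicity via Lemma~\ref{lem:collapsible-acyclic}. Where you genuinely diverge is part (1). The paper does not pass through the classical Nerve Theorem at all; it applies Meshulam's Homological Nerve Theorem (\cite{Meshulam}, Theorem 2.1), checking its hypotheses directly: collapsibility of each $U_\sigma$ gives $\widetilde{H}_j(U_\sigma)=0$ for all $j$ by Lemma~\ref{lem:collapsible-acyclic}, so the hypotheses hold in every degree $k$ and the homology of $K$ and of $\mathcal{N}(\mathcal{U})$ agree in all degrees. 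Your route --- Proposition~\ref{12} to downgrade strong-good to good, then Theorem~\ref{k} (equivalently Remark~\ref{13}) to obtain a (simple) homotopy equivalence, then functoriality of homology --- is shorter, reuses results the paper has already established, and in fact proves something stronger than the stated conclusion, namely a simple homotopy equivalence rather than merely degreewise homology isomorphisms. What the paper's Meshulam-based argument buys is economy of hypotheses: it only ever uses acyclicity of the intersections, so that proof of part (1) would survive verbatim if ``collapsible'' were weakened to ``acyclic,'' whereas your argument, resting on the good-cover Nerve Theorem, needs contractibility. Your two caveats --- that ``inclusion-induced'' must be read as the canonical comparison map of the nerve theorem (the paper's Meshulam map is not a literal inclusion either), and that one first reduces to finitely many distinct subcomplexes before applying the finite-union lemma --- are both apt, and neither creates a gap.
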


\begin{proof}
\textit{(1)} We apply Meshulam's Homological Nerve Theorem \emph{(\cite{Meshulam} Theorem 2.1)}. It states that if
\[
\widetilde{H}_j(U_\sigma) = 0 \quad \text{for all } \sigma \in \mathcal{N}^{(k)} \text{ and } 0 \leq j \leq k - \dim(\sigma),
\]
then \( \widetilde{H}_j(K) \cong \widetilde{H}_j(\mathcal{N}(\mathcal{U})) \) for all \( 0 \leq j \leq k \), and \( H_{k+1}(\mathcal{N}) \neq 0 \Rightarrow H_{k+1}(K) \neq 0 \).

Since \( \mathcal{U} \) is a strong good cover, every non-empty finite intersection
\[
U_\sigma := \bigcap_{i \in \sigma} U_i
\]
is a collapsible simplicial complex by assumption. By Lemma~\ref{lem:collapsible-acyclic}, this implies that \( U_\sigma \) is contractible and has trivial reduced homology in all degrees:
\[
\widetilde{H}_j(U_\sigma) = 0 \quad \text{for all } j \geq 0.
\]
In particular, this holds for every \( 0 \leq j \leq k - \dim(\sigma) \), so the hypotheses of Meshulam’s theorem are satisfied for all \( k \geq 0 \). Therefore, we conclude that
\[
H_k(K) \cong H_k(\mathcal{N}(\mathcal{U})) \quad \text{for all } k \geq 0.
\]

\textit{(2)} Fix any \( J \subseteq I \). Since \( \mathcal{U} \) is a strong good cover, each \( U_i \) and every finite intersection \( \bigcap_{i \in \sigma} U_i \), with \( \sigma \subseteq J \), is collapsible. Then by Lemma~\ref{lem:finite-union-collapsible}, the union
\[
K_J := \bigcup_{i \in J} U_i
\]
is collapsible. Therefore, \( K_J \simeq \mathrm{pt} \), and it follows that
\[
H_k(K_J) = 0 \quad \text{for all } k > 0.
\]
This shows that the persistence module \( \{H_k(K_J)\}_{J \subseteq I} \) vanishes in all positive degrees.
\end{proof}

\begin{remark}[Failure of Vanishing Under Good Covers]
If \( \mathcal{U} = \{U_i\}_{i \in I} \) is only a \emph{good cover} of \( K \), i.e., each \( U_i \) and all non-empty finite intersections \( U_\sigma \) are \emph{contractible} (but not necessarily collapsible), then the conclusion of Part (2) may fail.

In this case, the nerve theorem still gives a homotopy equivalence:
\[
H_k(\mathcal{N}(\mathcal{U})) \cong H_k(K).
\]
but the space \( K_J \) may have nontrivial homology in positive degrees, depending on the homology of the nerve.
\end{remark}

\begin{example}[Nontrivial Persistent Homology in a Good Cover]
Let \( K = S^1 \), and cover it by three open arcs \( U_1, U_2, U_3 \), each contractible and such that all pairwise and triple intersections are contractible (or empty). Then \( \mathcal{U} = \{U_1, U_2, U_3\} \) is a good cover of \( K \).

The nerve \( \mathcal{N}(\mathcal{U}) \) is the boundary of a 2-simplex (i.e., a circle), so:
\[
H_1(K_J) \cong H_1(\mathcal{N}_J) \cong \mathbb{Z} \quad \text{for } J = \{1,2,3\}.
\]
Hence, the persistence module \( \{H_k(K_J)\}_{J \subseteq I} \) is \emph{nontrivial} in positive degrees. This shows that the vanishing result in Part (2) of the theorem does \emph{not} hold under the weaker assumption of a good cover.
\end{example}

\subsection{Generalizations of Nerve Theorems without the Good Cover Condition}

   \noindent The following result is a variation of Theorem \ref{20}, where we assume collapsibility instead. Though the condition is stronger, the conclusion gives a collapse up to dimension $n$, offering a clearer view of the homotopical structure.
\begin{theorem}\label{21}
    Let $X$ be a finite topological space and let $\mathcal{U} = \{U_i\}_{i \in I}$ be an open cover of $X$. Let $N_0(\mathcal{U})$ be the subspace of the non-Hausdorff nerve $N(\mathcal{U})$ consisting of all collapsible intersections. If for every $x \in X$, the subspace $\mathcal{I}_x$ of $N_0(\mathcal{U})$ of the intersections which contain $x$, is collapsible, then $X \diagup\hspace{-0.17 cm}\searrow^n N_0(\mathcal{U})$.
\end{theorem}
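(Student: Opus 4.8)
The plan is to run the argument behind Theorem \ref{20} essentially verbatim, but to upgrade every hypothesis of homotopic triviality to collapsibility and, correspondingly, every conclusion of simple homotopy equivalence to an explicit collapse. The vehicle is the relation cylinder together with the \emph{collapsible} halves of Propositions \ref{e} and \ref{f}; since Theorem \ref{20} already produces a simple homotopy equivalence $X \diagup\hspace{-0.17 cm}\searrow N_0(\mathcal{U})$ through a common cylinder, the only change is to replace each appeal to the homotopically trivial case by the collapsible case, after which Corollary \ref{g} supplies the bounded collapse $X \diagup\hspace{-0.17 cm}\searrow^n N_0(\mathcal{U})$ with $n = h(B(R))$.

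Concretely, I would introduce the relation $R \subseteq X \times N_0(\mathcal{U})$ defined by $x\,R\,\sigma$ if and only if $x \in U_\sigma$, where $U_\sigma$ denotes the intersection represented by the point $\sigma$ of $N_0(\mathcal{U})$, and form the relation cylinder $B(R)$. I would then compute the two families of sets appearing in Propositions \ref{e} and \ref{f}. On one side, $R(F_x)$ is exactly the collection $\mathcal{I}_x$ of intersections containing $x$, because the minimal open sets of $X$ are down-sets; on the other side, the preimage of the minimal open set of a point $\sigma$ recovers (up to the relevant beat- and weak-point removals) the intersection $U_\sigma$ itself. Since $\mathcal{I}_x$ is collapsible by hypothesis and every $U_\sigma$ is collapsible by the defining property of $N_0(\mathcal{U})$, the collapsible case of Proposition \ref{f} yields $B(R) \searrow N_0(\mathcal{U})$ and the collapsible case of Proposition \ref{e} yields $B(R) \searrow X$.

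With both collapses in hand, the two spaces $X$ and $N_0(\mathcal{U})$ are strong deformation retracts obtained by collapsing the single cylinder $B(R)$, and Corollary \ref{g} assembles these into the $n$-bounded zig-zag $X \diagup\hspace{-0.17 cm}\searrow^n N_0(\mathcal{U})$, with $n = h(B(R))$ the height of the cylinder, as in the collapsible conclusion of that corollary.

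The main obstacle is the bookkeeping in the two identifications. One must match $R(F_x)$ to $\mathcal{I}_x$ and the preimage side to $U_\sigma$ with the orientation conventions of the face poset (faces versus cofaces, open hull versus closure) correctly tracked, and, crucially, verify that the deformation retractions realizing these matchings are \emph{genuine collapses} rather than mere weak homotopy equivalences. This last point is exactly where collapsibility, as opposed to the homotopic triviality used in Theorem \ref{20}, must be propagated through the construction, and it is the step that distinguishes the present statement from its weaker predecessor and justifies the sharper conclusion $\diagup\hspace{-0.17 cm}\searrow^n$.
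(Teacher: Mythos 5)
Your strategy is the paper's own: define a membership relation between $X$ and the collapsible part of the nerve, identify the two sets appearing in Propositions \ref{e} and \ref{f}, and let Corollary \ref{g} assemble the bounded equivalence. However, the orientation issue you defer to the end (``faces versus cofaces \dots correctly tracked'') is not routine bookkeeping; it is precisely where your write-up has a genuine gap, because with the relation taken as $R \subseteq X \times N_0(\mathcal{U})$ in the face-poset order, both of your claimed identifications are false. The set $\mathcal{I}_x = \{\sigma : x \in U_\sigma\}$ is a \emph{down}-set of $N_0(\mathcal{U})$ (if $x \in U_\sigma$ and $\tau \subseteq \sigma$, then $U_\sigma \subseteq U_\tau$, so $x \in U_\tau$), so the closure $\overline{R(F_x)}$ adjoins all cofaces of simplices containing $x$; these cofaces represent \emph{smaller} intersections that need not contain $x$, so $\overline{R(F_x)} \supsetneq \mathcal{I}_x$ in general and the collapsibility hypothesis on $\mathcal{I}_x$ cannot be applied to it. Dually, the minimal open set of $\sigma$ in $N_0(\mathcal{U})$ consists of the \emph{faces} of $\sigma$, so $R^{-1}$ of it is $\bigcup_{\tau \subseteq \sigma,\ \tau \in N_0(\mathcal{U})} U_\tau$, a union of supersets of $U_\sigma$ that is neither $U_\sigma$ nor collapsible in general; no ``beat- and weak-point removals'' repair this.

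The fix, and it is exactly what the paper does, is to define $R \subseteq X \times N_0(\mathcal{U})^{\mathrm{op}}$, reversing the order on the nerve side. Then minimal open sets on the nerve side consist of cofaces, so
\[
\underline{R^{-1}(U_\sigma)} \;=\; \underline{U_\sigma} \;=\; U_\sigma ,
\]
collapsible by the definition of $N_0(\mathcal{U})$, and closures on the nerve side are face-closed, so $\overline{R(F_x)} = \mathcal{I}_x$ on the nose (here one also uses that each $U_\sigma$ is open in $X$, hence a down-set, so $F_x$ meets $U_\sigma$ if and only if $x \in U_\sigma$). Propositions \ref{e}, \ref{f} and Corollary \ref{g} then give $X \diagup\hspace{-0.17 cm}\searrow^n N_0(\mathcal{U})^{\mathrm{op}}$ with $n = h(B(R))$, and a final step your proposal omits is needed to finish: since a poset and its opposite have the same order complex, $N_0(\mathcal{U})^{\mathrm{op}} \diagup\hspace{-0.17 cm}\searrow N_0(\mathcal{U})$, whence $X \diagup\hspace{-0.17 cm}\searrow^n N_0(\mathcal{U})$. (A minor further slip: collapses $B(R) \searrow X$ are not strong deformation retractions of finite spaces; they are sequences of weak-point removals, which is what the $\diagup\hspace{-0.17 cm}\searrow$ notation records.)
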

\begin{proof}
    For every $J \subseteq I$, denote $\mathcal{I}_J = \bigcap_{i \in J} U_i$. So the elements of $N_0(\mathcal{U})$ are the subsets $J \subseteq I$ such that $\mathcal{I}_J$ is collapsible.
    
    Define the relation $R \subseteq X \times N_0(\mathcal{U})^{\text{op}}$ as follows: for every $x \in X$ and $J \subseteq I$ such that $\mathcal{I}_J$ is collapsible, set
    \[
    x \, R \, \mathcal{I}_J \iff x \in \mathcal{I}_J.
    \]
    On the one hand,
    \[
    \underline{R^{-1}(U_{\mathcal{I}_J})} = \underline{\mathcal{I}_J} = \mathcal{I}_J,
    \]
    which is collapsible by construction of $N_0(\mathcal{U})$.
    
    On the other hand,
    \[
    \overline{R(F_x)} = \overline{\{ \mathcal{I}_J : \mathcal{I}_J \ni y \text{ for some } y \geq x \}} = \overline{\{ \mathcal{I}_J : \mathcal{I}_J \ni x \}} = \{ \mathcal{I}_J : x \in  \mathcal{I}_J \} = \mathcal{I}_x,
    \]
    which is collapsible by hypothesis.
    
    Thus, by Propositions \ref{e} and \ref{f} and Corollary \ref{g}, $K(X) \diagup\hspace{-0.17 cm}\searrow^n K(N_0(\mathcal{U})^{\text{op}})$ and therefore $X \diagup\hspace{-0.17 cm}\searrow^n N_0(\mathcal{U})^{\text{op}}$. Since $N_0(\mathcal{U})^{\text{op}} \diagup\hspace{-0.17 cm}\searrow N_0(\mathcal{U})$, we deduce that $X \diagup\hspace{-0.17 cm}\searrow^n N_0(\mathcal{U})$.
\end{proof}
    \noindent If we combine Theorem \ref{20} with propositions \ref{e} and \ref{f}, we get the following result: 
\begin{remark}\label{22}
	Let \( X \) and \( Y \) be finite topological spaces, and let \( R_1 \subseteq X \times Y \) be a relation between them. Suppose \( \mathcal{U} = \{U_i\}_{i \in I} \) is an open cover of \( X \), and let \( N_0(\mathcal{U}) \) denote the subspace of the non-Hausdorff nerve \( N(\mathcal{U}) \) consisting of all homotopically trivial intersections.
	
	Assume the following conditions hold:
	\begin{enumerate}
		\item For each \( x \in X \), the subspace \( \mathcal{I}_x \subseteq N_0(\mathcal{U}) \) consisting of intersections that contain \( x \) is homotopically trivial.
		\item For every \( y \in Y \), the set \( \underline{(R_1)^{-1}(U_y)} \subseteq X \) is homotopically trivial.
		\item For every \( x \in X \), the set \( \overline{(R_1)(F_x)} \subseteq Y \) is also homotopically trivial.
	\end{enumerate}
	
	Under these assumptions, we can relate the homotopical structures of \( X \), \( Y \), and \( N_0(\mathcal{U}) \) using known results:
	\begin{enumerate}[label=\alph*.]
		\item By a version of the nerve theorem (Theorem~\ref{20}), condition (1) implies that \( X \) is simple homotopy equivalent to \( N_0(\mathcal{U}) \).
		\item By Proposition~\ref{e}, condition (2) ensures that the poset \( B(R_1) \) collapses to \( X \), so \( X \) and \( B(R_1) \) have the same simple homotopy type.
		\item Similarly, by Proposition~\ref{f}, condition (3) guarantees that \( B(R_1) \) collapses to \( Y \), hence \( Y \) is also simple homotopy equivalent to \( B(R_1) \).
	\end{enumerate}
	
	Putting these together, we see that \( X \), \( Y \), and \( N_0(\mathcal{U}) \) are all simple homotopy equivalent to each other. This remark illustrates how the interaction between a relation and a cover, under suitable homotopical conditions, allows us to transfer the simple homotopy type among the involved spaces.
\end{remark}
    \noindent The previous result can be naturally extended to relate more than two finite spaces via compositions of relations.
    \begin{remark}\label{23}
    	Let \( X \), \( Y \), and \( Z \) be finite topological spaces. Suppose we are given relations \( R_1 \subseteq X \times Y \) and \( R_2 \subseteq Y \times Z \), and define their composition \( R = R_2 \circ R_1 \subseteq X \times Z \). Let \( \mathcal{U} = \{U_i\}_{i \in I} \) be an open cover of \( X \), and let \( N_0(\mathcal{U}) \) denote the subspace of the non-Hausdorff nerve \( N(\mathcal{U}) \) consisting of intersections that are homotopically trivial.
    	
    	Assume the following conditions hold:
    	\begin{enumerate}
    		\item For every \( x \in X \), the subspace \( \mathcal{I}_x \subseteq N_0(\mathcal{U}) \), made up of all intersections that contain \( x \), is homotopically trivial.
    		\item For each \( z \in Z \), the set \( \underline{(R)^{-1}(U_z)} \subseteq X \) is homotopically trivial.
    		\item For each \( x \in X \), the set \( \overline{(R)(F_x)} \subseteq Z \) is homotopically trivial.
    	\end{enumerate}
    	
    	These assumptions allow us to connect the homotopy types of \( X \), \( Z \), and the nerve \( N_0(\mathcal{U}) \), as follows:
    	\begin{enumerate}[label=\alph*.]
    		\item From the first condition, using a nerve-type result (Theorem~\ref{20}), we deduce that \( X \) is simple homotopy equivalent to \( N_0(\mathcal{U}) \).
    		\item The second condition, via Proposition~\ref{3}, implies that the poset \( B(R) \) collapses to \( X \), so \( X \) and \( B(R) \) have the same simple homotopy type.
    		\item Similarly, from the third condition and Proposition~\ref{4}, we get that \( B(R) \) collapses to \( Z \), implying that \( Z \) is also simple homotopy equivalent to \( B(R) \).
    	\end{enumerate}
    	
    	Therefore, putting all this together, we conclude that \( Z \) is simple homotopy equivalent to both \( X \) and \( N_0(\mathcal{U}) \). This observation provides a useful way to understand the homotopical relationships among finite spaces connected via composed relations.
    \end{remark}
    \noindent By combining the observations made in Remarks~\ref{22} and~\ref{23}, we can extend the simple homotopy equivalences across multiple finite topological spaces connected by successive relations.
    \begin{remark}\label{24}
    	Let \( X \), \( Y \), and \( Z \) be finite topological spaces with relations \( R_1 \subseteq X \times Y \) and \( R_2 \subseteq Y \times Z \), and define their composition \( R = R_2 \circ R_1 \subseteq X \times Z \). Suppose \( \mathcal{U} = \{U_i\}_{i \in I} \) is an open cover of \( X \), and let \( N_0(\mathcal{U}) \) be the subspace of the non-Hausdorff nerve \( N(\mathcal{U}) \) consisting of intersections that are homotopically trivial.
    	
    	Assume the following conditions hold:
    	\begin{enumerate}
    		\item For each \( x \in X \), the collection of intersections in \( N_0(\mathcal{U}) \) containing \( x \), denoted \( \mathcal{I}_x \), is homotopically trivial.
    		\item For every \( y \in Y \), the preimage \( \underline{(R_1)^{-1}(U_y)} \subseteq X \) is homotopically trivial.
    		\item For each \( x \in X \), the image \( \overline{(R_1)(F_x)} \subseteq Y \) is homotopically trivial.
    		\item For each \( z \in Z \), the preimage \( \underline{(R)^{-1}(U_z)} \subseteq X \) is homotopically trivial.
    		\item For each \( x \in X \), the image \( \overline{(R)(F_x)} \subseteq Z \) is homotopically trivial.
    	\end{enumerate}
    	
    	With all these conditions satisfied, the following simple homotopy equivalences follow naturally:
    	\begin{enumerate}[label=\alph*.]
    		\item \( Y \) is simple homotopy equivalent to \( N_0(\mathcal{U}) \), and hence to \( X \),
    		\item \( Z \) is also simple homotopy equivalent to \( N_0(\mathcal{U}) \), and therefore to \( X \).
    	\end{enumerate}
    \end{remark}
    \noindent This remark shows how a series of homotopy-based connections and coverings can keep the same simple homotopy type across different finite topological spaces. It points out the clear structure shared by \( X \), \( Y \), and \( Z \) when these conditions are in place.
        
    \noindent The following remark generalizes Remark~\ref{24} to the case of a multiple composition of relations.

\begin{remark}[Generalization to Multiple Cylinders of Relations]\label{25}
	 Consider a finite sequence of spaces \( X_0, X_1, \dots, X_n \), along with relations \( R_i \subseteq X_i \times X_{i+1} \) for \( i = 0, \dots, n-1 \). Let \( \mathcal{U} = \{U_i\}_{i \in I} \) be an open cover of \( X_0 \), and let \( N_0(\mathcal{U}) \) denote the subspace of the non-Hausdorff nerve \( N(\mathcal{U}) \) consisting of homotopically trivial intersections.
	
	Assume the following conditions hold:
	\begin{enumerate}
		\item For every \( x \in X_0 \), the subspace \( \mathcal{I}_x \subseteq N_0(\mathcal{U}) \), consisting of intersections containing \( x \), is homotopically trivial.
		\item For each \( i = 0, \dots, n-1 \) and each \( x_{i+1} \in X_{i+1} \), the subspace \( \underline{(R_i)^{-1}(U_{x_{i+1}})} \subseteq X_i \) is homotopically trivial.
		\item For each \( i = 0, \dots, n-1 \) and each \( x_i \in X_i \), the subspace \( \overline{(R_i)(F_{x_i})} \subseteq X_{i+1} \) is homotopically trivial.
		\item Let \( R = R_{n-1} \circ \cdots \circ R_0 \). Then for each \( x_n \in X_n \), the subspace \( \underline{R^{-1}(U_{x_n})} \subseteq X_0 \) is homotopically trivial, and for each \( x_0 \in X_0 \), the subspace \( \overline{R(F_{x_0})} \subseteq X_n \) is homotopically trivial.
	\end{enumerate}
	
	Under these assumptions, all spaces \( X_0, X_1, \dots, X_n \) are simple homotopy equivalent to the subspace \( N_0(\mathcal{U}) \). In particular, they all share the same simple homotopy type.
\end{remark}
    \noindent This means that if we check certain simple conditions at each step$-$like making sure that the images and preimages of points under these relations behave nicely (in a homotopically trivial way)$-$then the overall shape, or homotopy type, of all these spaces remains the same. So even though we are moving from one space to another through a chain of relations, the essential topological information is preserved. It shows how simple homotopy type can be tracked through such a sequence, starting from an open cover of the first space.

\subsection{Implications of Reduced Non-Hausdorff Nerve in Simplicial Setting}
    \noindent Here we have introduced reduced nerve for simplicial complexes and found its corresponding nerve theorem.
\begin{definition}
    Let \( K \) be a simplicial complex (or more generally, a regular CW-complex) and let \( U = \{L_i\}_{i \in I} \) be a cover of \( K \) by subcomplexes. Consider the open cover \( U_\chi = \{\chi(L_i)\}_{i \in I} \) of the face poset \( X(K) \). We define the reduced nerve \( \tilde{\mathcal{N}}(U) \) as the regular CW-complex satisfying \( \chi(\tilde{\mathcal{N}}(U)) = \tilde{N}(U) \).
\end{definition}
    \noindent The next theorem is the counterpart of the Theorem \ref{52} in the context of complexes.
\begin{theorem}\label{55}
    Let $K$ be a finite simplicial complex (or a regular cell complex) of dimension $\leq n$, and let $\mathcal{U} = \{ L_i \}_{i \in I}$ be a finite family of subcomplexes of $K$ such that $\tilde{ \mathcal{N}}_0(\mathcal{U})$, the subcomplex of the reduced nerve $\mathcal{\tilde{N}(U)}$ whose simplices are chains of collapsible intersections, is again of dimension less or equal to $n$. If for every simplex $\sigma \in K$, the subcomplex $S_\sigma$ of $\tilde{\mathcal{N}}_0(\mathcal{U})$ of chains with elements all containing $\sigma$ is collapsible and its dimension is less or equal than $n - \dim_K(\sigma)$, then $K \diagup\hspace{-0.17 cm}\searrow^{n+1} \tilde{\mathcal{N}}_0(\mathcal{U})$  
\end{theorem}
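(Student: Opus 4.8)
The plan is to mirror the proof of Theorem~\ref{52} (the finite-space version) and transport it through the face-poset functor $\chi$, exactly as was done when deriving Theorem~\ref{25'} from Theorem~\ref{20}. Since $\chi$ translates the simplicial data into the finite-space data and preserves the relevant combinatorial invariants, the strategy is to verify that the hypotheses here are precisely the $\chi$-images of the hypotheses of Theorem~\ref{52}, apply that theorem to $\chi(K)$, and then pull the conclusion back to $K$.

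First I would set up the dictionary. Applying $\chi$ to the cover $\mathcal{U} = \{L_i\}_{i \in I}$ produces the open cover $\mathcal{U}_\chi = \{\chi(L_i)\}_{i \in I}$ of the finite poset $X(K) = \chi(K)$. By the definition of the reduced nerve, $\chi(\tilde{\mathcal{N}}(\mathcal{U})) = \tilde{N}(\mathcal{U}_\chi)$, and likewise the collapsible-intersection subcomplex corresponds to $\chi(\tilde{\mathcal{N}}_0(\mathcal{U})) = \tilde{N}_0(\mathcal{U}_\chi)$. Here I would invoke the standard facts that $\chi$ sends collapsible complexes to collapsible (homotopically trivial) finite spaces and sends simplices $\sigma \in K$ to points $x = \chi(\sigma) \in \chi(K)$ with $\dim_K(\sigma) = h_{\chi(K)}(x)$, so that the dimension bounds $\dim \le n$ become height bounds $h \le n$. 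The next step is to translate the two structural hypotheses: that $\tilde{\mathcal{N}}_0(\mathcal{U})$ has dimension $\le n$ becomes the requirement that $\tilde{N}_0(\mathcal{U}_\chi)$ has height $\le n$; and that for each $\sigma$ the subcomplex $S_\sigma$ is collapsible with $\dim S_\sigma \le n - \dim_K(\sigma)$ becomes the requirement that for each $x \in \chi(K)$ the subspace $\mathcal{I}_x$ of $\tilde{N}_0(\mathcal{U}_\chi)$ of intersections containing $x$ is collapsible with height $\le n - h_{\chi(K)}(x)$.

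With the dictionary in place, I would apply Theorem~\ref{52} to $\chi(K)$ with the cover $\mathcal{U}_\chi$, yielding $\chi(K) \diagup\hspace{-0.17 cm}\searrow^{n+1} \tilde{N}_0(\mathcal{U}_\chi) = \chi(\tilde{\mathcal{N}}_0(\mathcal{U}))$. Finally, since $\chi$ is a functor that both reflects and preserves (simple) homotopy type and its collapse structure—so that a collapse of length $n+1$ of finite spaces corresponds to a collapse of length $n+1$ of the underlying complexes (via Theorem~4.2.11 and the barycentric-subdivision machinery of \cite{Barmak})—I would conclude $K \diagup\hspace{-0.17 cm}\searrow^{n+1} \tilde{\mathcal{N}}_0(\mathcal{U})$, as desired.

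The main obstacle I anticipate is the precise bookkeeping of the numerical collapse length under the functor $\chi$: one must be careful that the graded collapse $\diagup\hspace{-0.17 cm}\searrow^{n+1}$ is genuinely preserved (not merely a plain simple homotopy equivalence) when passing between the finite-space and simplicial-complex settings, and that the correspondence between $\dim_K(\sigma)$ and the height $h_{\chi(K)}(\chi(\sigma))$ is correctly normalized so that the bound $n - \dim_K(\sigma)$ matches $n - h_{\chi(K)}(x)$ on the nose. Verifying that $\chi$ carries the collapsibility and height constraints on each $S_\sigma$ faithfully onto the corresponding constraints on each $\mathcal{I}_x$—so that the hypotheses of Theorem~\ref{52} are met exactly—is where the real care is needed; the rest is a formal transport argument.
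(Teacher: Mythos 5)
Your proposal is correct and follows essentially the same route as the paper's own proof: apply $\chi$ to transport the cover, identify $\chi(\tilde{\mathcal{N}}_0(\mathcal{U}))$ with the subspace of collapsible intersections, translate the dimension bounds on $S_\sigma$ into height bounds on $\mathcal{I}_{\chi(\sigma)}$, invoke Theorem~\ref{52}, and pull the graded collapse back to $K$. The bookkeeping concern you raise about preserving the collapse length $n+1$ under $\chi$ is legitimate, but the paper treats this transfer step with the same brevity you do.
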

\begin{proof}
    Let \( \chi(\tilde{\mathcal{N}}(\mathcal{U}))= \tilde{N}(\mathcal{U}) \) such that $\chi(\tilde{\mathcal{N}}(\mathcal{U}))$ denote the reduced nerve of the family \( \mathcal{U} = \{ L_i \}_{i \in I} \), with simplices representing non-empty intersections among the subcomplexes \( L_i \) of $K$. Define \( \tilde{\mathcal{N}}_0(\mathcal{U}) \) as the subcomplex of \( \tilde{\mathcal{N}}(\mathcal{U}) \) whose simplices are chains of collapsible intersections, which has dimension \( \leq n \).
    
    Therefore, in the associated finite space $\chi(K)$, \( \chi(\tilde{\mathcal{N}}_0(\mathcal{U})) \) is the subspace of \( \tilde{N}(\mathcal{U}) \) consisting of all collapsible intersections.

    Now, for each simplex \( \sigma \in K \), let \( S_\sigma \subset \tilde{\mathcal{N}}_0(\mathcal{U}) \) denote the subcomplex containing chains where each element includes \( \sigma \). By hypothesis, \( S_\sigma \) is collapsible and has dimension \( \leq n - \dim_K(\sigma) \). Therefore we have for every $\chi(\sigma)\in \chi(K)$, the subspace $\chi(S_\sigma)$ of $\chi(\tilde{\mathcal{N}}_0(\mathcal{U}))$ of the intersections which contains $\chi(\sigma)$ is collapsible and of height  $ \leq n- h_\chi(K) \chi(\sigma)$.

    By applying Theorem \ref{52}, we can conclude that \( \chi(K) \diagup\hspace{-0.17 cm}\searrow^{n+1} \chi(\tilde{\mathcal{N}}_0(\mathcal{U}))\). This implies that \( K \diagup\hspace{-0.17 cm}\searrow^{n+1} \tilde{\mathcal{N}}_0(\mathcal{U})\). This completes the proof.
\end{proof}

\section{Conclusion}
    In this paper, we introduce multiple cylinder of relations, extending relation cylinders and non-Hausdorff mapping cylinders to sequences of finite $T_{0}$-spaces. When induced by maps, these cylinders serve as intermediate spaces collapsing to the related finite spaces.
    
    We also define strong-good covers for simplicial complexes and finite spaces, with collapsible intersections, and establish a strengthened Nerve Theorem. This shows that such spaces have the same simple homotopy type as their nerves, enhancing homotopical tools. Future work could explore broader uses of multiple cylinder relations and strong-good covers in analyzing complex homotopy structures across finite and combinatorial spaces.

\end{document}